\theoremstyle{theorem}%
\newtheorem{theorem}{Theorem}[section]
\newtheorem{lemma}[theorem]{Lemma}
\newtheorem{corollary}[theorem]{Corollary}
\theoremstyle{example}%
\newtheorem{example}[theorem]{Example}
\newtheorem{remark}[theorem]{Remark}
\theoremstyle{definition}%
\newtheorem{definition}[theorem]{Definition}%
\newcommand{\f}[1]{\mathcal{#1}}
\newcommand{\syn}[2]{\operatorname{Syn}\left(#1,#2\right)}
\newcommand{\thi}[2]{\operatorname{Thick}\left(#1,#2\right)}
\newcommand{\ps}[2]{\operatorname{PS}\left(#1,#2\right)}
\newcommand{\fs}[1]{\f{P}_f\left(#1\right)}
\begin{document}

\title[A characterization of piecewise $\f{F}$-syndetic sets]{A characterization of piecewise $\f{F}$-syndetic sets}

\author{\fnm{Conner} \sur{Griffin}}\email{cgrffn11@memphis.edu}

\affil{\orgdiv{Department of Mathematical Sciences}, \orgname{The University of Memphis}, \orgaddress{\city{Memphis}, \state{TN}, \postcode{38152}, \country{USA}}}

\abstract{Some filter relative notions of size, $\left( \f{F},\f{G}\right) $-syndeticity and piecewise $\f{F} $-syndeticity, were defined and applied with clarity and focus by Shuungula, Zelenyuk and Zelenyuk \cite{SZZ}. These notions are generalizations of the well studied notions of syndeticity and piecewise syndeticity. Since then, there has been an effort to develop the theory around the algebraic structure of the Stone-\v{C}ech compactification so that it encompasses these new generalizations. We prove one direction of a characterization of piecewise $\f{F}$-syndetic sets. This completes the characterization, as the other direction was proved by Christopherson and Johnson \cite[Theorem 4.4]{CJ}.}

\keywords{Stone-\v{C}ech Compactification, syndetic sets, thick sets, piecewise syndetic sets, ultrafilters}

%%\pacs[JEL Classification]{D8, H51}

\pacs[MSC Classification]{54D80, 22A15}

\maketitle

\tableofcontents
\section{Notions of size in a semigroup}
We will use the following notation throughout this paper.
\begin{enumerate}
	\item We take $\mathbb{N} := \{1, 2, 3, \dots\}$
	\item For a set $S$, $\fs{S}:=\{A \subseteq S: \ \varnothing \ne A, \ \left|A\right|< \infty \}.$
	\item For a semigroup $S$, $A \subseteq S$ and $h \in S$, $h^{-1}A := \{x: hx \in A\}.$
\end{enumerate}

There are several different notions of size in a semigroup that have applications in combinatorial number theory.

\begin{definition}[Syndetic, thick, piecewise syndetic]
	Let $S$ be a semigroup.
	\begin{enumerate}
		\item A set $A \subseteq S$ is \emph{syndetic} if and only if there exists some $H \in \fs{S}$ such that $S = \bigcup_{h \in H}h^{-1}A.$
		\item A set $A \subseteq S$ is \emph{thick} if and only if for all $H \in \fs{S}$ we have $\bigcap_{h \in H}h^{-1}A \ne \varnothing.$
		\item A set $A \subseteq S$ is \emph{piecewise syndetic} if and only if there is some $H \in \fs{S}$ such that \[\left\{x^{-1}\left( \bigcup_{h \in H}h^{-1}A \right): \quad x\in S \right\}\] has the finite intersection property. 
	\end{enumerate}
\end{definition}
In the semigroup $\left(\mathbb{N},+\right)$, syndetic sets are the sets with bounded gaps and thick sets are the sets with arbitrarily long sequences of consecutive natural numbers. In this case -- and in the case of the generalization to arbitrary semigroups -- remark \ref{du} should be clear.
\begin{remark}\label{du}
	Let $S$ be a semigroup and let $A \subseteq S$.
	\begin{enumerate}
		\item $A$ is thick if and only if $A^c$ is not syndetic.
		\item $A$ is syndetic if and only if $A^c$ is not thick.
		\item For any syndetic set $A$ and any thick set $B$, $A \cap B \ne \varnothing.$
	\end{enumerate} 
\end{remark}
The following characterization of piecewise syndetic is important.
\begin{restatable}{theorem}{abc}\label{abc}
	A set is piecewise syndetic if and only if it is the intersection of a syndetic set and a thick set.
\end{restatable}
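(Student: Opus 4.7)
The plan is to reformulate the piecewise syndetic condition and then handle the two implications separately. Since $x^{-1}\bigcup_{h \in H} h^{-1}A = \bigcup_{h \in H}(hx)^{-1}A$ for any $x \in S$, the finite-intersection-property clause in the definition simply says that $T := \bigcup_{h \in H} h^{-1}A$ is thick. Thus $A$ is piecewise syndetic if and only if there is some $H \in \fs{S}$ with $\bigcup_{h \in H} h^{-1}A$ thick, and I would use this reformulation throughout.

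For the ``if'' direction, suppose $A = C \cap D$ where $C$ is syndetic, witnessed by $H \in \fs{S}$, and $D$ is thick. Given any $F \in \fs{S}$, the set $HF := \{hx : h \in H,\ x \in F\}$ lies in $\fs{S}$, so thickness of $D$ yields a common point $y \in \bigcap_{g \in HF} g^{-1}D$. For each $x \in F$, syndeticity of $C$ supplies some $h_x \in H$ with $h_x(xy) \in C$, and $h_x(xy) \in D$ by the choice of $y$; hence $h_x xy \in A$, placing $y$ in $x^{-1}\bigcup_{h \in H} h^{-1}A$ for every $x \in F$. So this union is thick, and $A$ is piecewise syndetic.

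For the ``only if'' direction, let $H \in \fs{S}$ witness that $T = \bigcup_{h \in H} h^{-1}A$ is thick. I would set $D := T \cup A$ (thick and containing $A$) and $C := A \cup (S \setminus D)$, which automatically gives $C \cap D = A$. It remains to prove $C$ is syndetic, and by Remark \ref{du} this is equivalent to $C^c = D \setminus A = T \setminus A$ being non-thick. I would argue by contradiction: if $T \setminus A$ were thick, then with the finite set $F := H \cup HH$, where $HH := \{h_1 h_2 : h_1, h_2 \in H\}$, there exists $y$ with $gy \in T \setminus A$ for all $g \in F$. Picking any $h \in H \subseteq F$, the containment $hy \in T$ yields some $h' \in H$ with $h'hy \in A$; but $h'h \in HH \subseteq F$ forces $h'hy \in T \setminus A$, contradicting $h'hy \in A$.

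The main obstacle is identifying the right candidate for $D$: the condition $A \subseteq D$ with $D$ thick and $D \setminus A$ non-thick is a delicate balance, and it is not obvious a priori that any such $D$ should exist. The choice $D = T \cup A$ is natural because $D \setminus A = T \setminus A$ directly exposes the two-level structure of $T$, and the contradiction argument is then forced: thickness of $T \setminus A$ must be tested against $H$ at two scales simultaneously, which is exactly what enlarging the probe set from $H$ to $H \cup HH$ accomplishes.
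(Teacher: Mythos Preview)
Your proof is correct and takes a genuinely different route from the paper's.

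The paper proves the ``only if'' direction via the algebra of $\beta S$: it invokes Theorem~\ref{aps} to obtain a minimal idempotent $e$ with $A'(e)$ $=\{x:x^{-1}A\in e\}$ syndetic, and then exhibits the decomposition $A=\bigl(A\cup A'(e)\bigr)\cap\bigl(A\cup (A^c)'(e)\bigr)$, using the idempotent relation $e\cdot e=e$ (Lemma~\ref{ids}) to verify that the second factor is thick. For the ``if'' direction the paper simply cites \cite[Theorem 4.49]{HiSt1}. By contrast, your argument is purely combinatorial and self-contained: you take the thick set to be $D=T\cup A$ with $T=\bigcup_{h\in H}h^{-1}A$, take the syndetic set to be $C=A\cup D^c$, and rule out thickness of $C^c=T\setminus A$ by the two-scale probe $F=H\cup HH$. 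Your ``if'' direction is likewise elementary.

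What each approach buys: yours is shorter, needs no ultrafilters, and makes the theorem accessible without the Stone--\v{C}ech machinery. The paper's approach, on the other hand, is chosen precisely because it \emph{generalizes}: the same template (replace $e$ by an idempotent in $K(\overline{\f{F}})$ via Theorem~\ref{rps}) yields the filter-relative Theorem~\ref{cpfs}, which is the paper's main result. Your $H\cup HH$ trick relies on the reformulation ``$A$ is piecewise syndetic iff $\bigcup_{h\in H}h^{-1}A$ is thick for some finite $H$'', and there is no equally clean combinatorial reformulation of piecewise $\f{F}$-syndeticity, so it is not clear your method would extend to that setting.
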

We will prove one direction of this after introducing the Stone-\v{C}ech compactification of a semigroup. We will present a new proof which uses the relationship between piecewise syndetic sets and minimal idempotent ultrafilters.

The following theorem shows the connection between piecewise syndeticity and combinatorial number theory. It can be used to prove Van der Waerden's theorem. Indeed, if $\bigcup_{i=1}^kC_i=\mathbb{N}$ is a finite partition of the natural numbers then there is an $1\le i \le k$ such that $C_i$ is piecewise syndetic.
\begin{theorem} \label{vdw}
	Let $A$ be a piecewise syndetic set in $\mathbb{N}$ and let $\ell\in \mathbb{N}.$ There exist $a,d \in \mathbb{N}$ such that $\{a, a+d, a+2d, \dots, a+\ell d\} \subseteq A.$
\end{theorem}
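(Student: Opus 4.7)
The plan is to unpack the definition of piecewise syndeticity to extract a thick ``base set'' $B$ that is a finite union of translates of $A$, apply the classical finitary Van der Waerden theorem to a long interval inside $B$ colored by the relevant translations, and then translate the resulting monochromatic progression back into $A$.

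First I would fix $H = \{h_1, \dots, h_k\} \in \fs{\mathbb{N}}$ witnessing that $A$ is piecewise syndetic, and set $B := \bigcup_{h \in H}(-h + A) = \{n \in \mathbb{N} : n + h \in A \text{ for some } h \in H\}$. Applying the finite intersection property of $\{-x + B : x \in \mathbb{N}\}$ to the finite set $\{1, 2, \dots, N\}$ produces some $y \in \mathbb{N}$ with $\{y+1, y+2, \dots, y+N\} \subseteq B$; in other words, $B$ contains arbitrarily long intervals.

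Next I would invoke Van der Waerden's theorem to pick $N = N(k, \ell)$ so large that every $k$-coloring of $\{1, 2, \dots, N\}$ contains a monochromatic arithmetic progression of length $\ell + 1$. For the interval $\{y+1, \dots, y+N\} \subseteq B$, I would define a coloring $c : \{y+1, \dots, y+N\} \to H$ by selecting, for each $n$, some $c(n) \in H$ with $n + c(n) \in A$. Translating this coloring onto $\{1, \dots, N\}$ and applying Van der Waerden produces a monochromatic progression $a_0, a_0 + d, \dots, a_0 + \ell d$ inside $\{y+1, \dots, y+N\}$ of common color $h_0 \in H$, so that $a_0 + id + h_0 \in A$ for each $i \in \{0, 1, \dots, \ell\}$. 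Setting $a := a_0 + h_0$ would then yield $\{a, a+d, \dots, a+\ell d\} \subseteq A$, as required.

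The heavy lifting is done by Van der Waerden's theorem itself, which I would use as a black box; there is no circularity, since the reduction from a finite partition of $\mathbb{N}$ to a piecewise syndetic cell (sketched just before the theorem statement) does not use the arithmetic progression content of Van der Waerden. The main opportunity for error is keeping careful track of the two translations involved — by $y$ when locating the long interval inside $B$, and by $h_0$ when pulling the monochromatic progression back into $A$ — but beyond this bookkeeping, no further combinatorial input is required.
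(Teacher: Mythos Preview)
Your argument is correct as a self-contained proof that piecewise syndetic subsets of $\mathbb{N}$ contain arbitrarily long arithmetic progressions: the extraction of the thick set $B$, the passage to a long interval, the $k$-coloring by the witnessing shift $h\in H$, and the final translation by $h_0$ are all sound.

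The paper, however, does not supply its own proof; it defers to \cite[Theorem 14.1]{HiSt1} and stresses that the proofs there (algebraic, in the Furstenberg--Katznelson style, carried out in $\beta\mathbb{N}$) are \emph{independent of Van der Waerden's theorem}. That independence is precisely the point: combined with the elementary fact that any finite partition of $\mathbb{N}$ has a piecewise syndetic cell, Theorem~\ref{vdw} then yields a genuinely new proof of Van der Waerden. Your route, by contrast, invokes Van der Waerden as a black box. There is no logical circularity in your proof of the theorem \emph{as stated}, but your closing remark misidentifies where the circularity would bite: the chain ``Van der Waerden $\Rightarrow$ Theorem~\ref{vdw} $\Rightarrow$ Van der Waerden'' is circular, so with your argument in place the paper's advertised application of the theorem collapses to a triviality. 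In short, your approach is elementary and quick; the cited one is harder but buys an independent derivation of the partition result.
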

For a proof of this theorem see \cite[Theorem 14.1]{HiSt1}. The first algebraic proof of Theorem \ref{vdw} along these lines is due to Furstenberg and Katznelson. \cite{FK1} These proofs are independent of Van der Waerden's Theorem.

This gives a combinatorial property of piecewise syndetic sets, but it does not characterize them. That is, there are sets which contain arbitrarily long arithmetic progressions which are not piecewise syndetic.
\begin{example}
	The set $\cup_{n=1}^{\infty}\{n!, n!+n, n!+2n,\dots, n!+n^2\}$ is not piecewise syndetic.
\end{example}

There are many other notions of size that have found applications in combinatorics. It is a well studied topic. \cite{HMS, BHM, HiSt1, Hi}

\section{The Stone-\v{C}ech compactification of a semigroup}
There are several set theoretic notions that we need in order to define the Stone-\v{C}ech compactification.
\begin{definition}[Filter, prime, ultrafilter]
	Let $S$ be a nonempty set and let $\f{F} \subseteq \mathcal{P}\left(S\right).$
	\begin{itemize}
		\item We call $\f{F}$ a \emph{filter on $S$} if it satisfies the following:
		\begin{enumerate}
			\item $\varnothing \not\in \f{F}$.
			\item (Upward closed) if $A \in \f{F}$ and $A \subseteq B \subseteq S$ then $B \in \f{F}.$
			\item (Downward directed) if $A \in \f{F}$ and $B\in \f{F}$ then $A \cap B \in \f{F}.$
		\end{enumerate}
		\item We call $\f{F}$ \emph{prime} if it has the following property:
		\begin{enumerate}
			\item[4.] if $A \cup B \in \f{F}$ then $A \in \f{F}$ or $B \in \f{F}.$
		\end{enumerate}
		\item We call $\f{F}$ an \emph{ultrafilter on $S$} if and only if $\f{F}$ is a prime filter.
	\end{itemize}
\end{definition}

There are many equivalent ways of defining ultrafilters. Alternatively, $\f{F}$ is an ultrafilter if and only if it is a filter which is not properly contained in any other filter. In lattice and order therory it is common to refer to the 4th property above as \emph{prime}. It may also be called partition regularity. We use the following specific case of primality often: for any ultrafilter, $p,$ on a set $S$ and for any set $A \subseteq S$, either $A \in p$ or $A^c \in p.$

\begin{definition}[The Stone-\v{C}ech compactification]
	Let $S$ be a discrete topological space. Then the Stone-\v{C}ech compactification of $S$ is the set \[\beta S:=\{p: \ p \ \textrm{is an ultrafilter on} \ S\}\] together with the topology generated by the closed base \[\{\{p \in \beta S: \ A \in p\}: \ \varnothing \ne A \subseteq S\}.\]
\end{definition}

\begin{definition}[Principal ultrafilters]
	For any $x \in S$, the ultrafilter $\{A \subseteq S: \ x \in A\}$ is called the \emph{principal ultrafilter} associated with $x.$
\end{definition}
\begin{remark}
	It is a typical abuse of notation to identify $x\in S$ with the ultrafilter $\{ A \subseteq S: \ x \in A \}$ and to refer to the set of all principal ultrafilters as $S$. For $A \subseteq S$, the $\beta S$ closure of $A$ is $\overline{A} = \{ p \in \beta S: \ A \in p\}$.
\end{remark}

\begin{lemma}
	Let $S$ be a set and let $\f{A} \subseteq \f{P}\left(S\right)$ have the finite intersection property. Then there is a filter which contains $\f{A}.$
\end{lemma}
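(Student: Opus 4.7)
The plan is to construct the filter generated by $\f{A}$ directly and then verify the axioms. I would define
\[ \f{F} := \left\{ B \subseteq S : \text{there exist } A_1, \dots, A_n \in \f{A} \text{ with } \bigcap_{i=1}^n A_i \subseteq B \right\}, \]
that is, the collection of all supersets of finite intersections of members of $\f{A}$. The strategy is then to check that $\f{F}$ satisfies the three filter axioms and that $\f{A} \subseteq \f{F}$.

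Among the filter axioms, upward closedness is immediate from the definition, since if $A_1 \cap \cdots \cap A_n \subseteq B$ and $B \subseteq C$ then $A_1 \cap \cdots \cap A_n \subseteq C$. Downward directedness is obtained by concatenating witnesses: given $B_1, B_2 \in \f{F}$ with $B_1 \supseteq A_1 \cap \cdots \cap A_n$ and $B_2 \supseteq A'_1 \cap \cdots \cap A'_m$, the combined finite subfamily of $\f{A}$ witnesses that $B_1 \cap B_2 \in \f{F}$. The inclusion $\f{A} \subseteq \f{F}$ is then handled by the $n=1$ case, taking the singleton witness $\{A\}$ for each $A \in \f{A}$.

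The only point at which the hypothesis of the lemma actually enters the argument is the verification that $\varnothing \notin \f{F}$. Here the finite intersection property is exactly what is needed: every finite intersection drawn from $\f{A}$ is nonempty by assumption, so no such intersection can be contained in $\varnothing$, and hence $\varnothing$ cannot belong to $\f{F}$. I do not expect any genuine obstacle in this proof; the content of the lemma is essentially the observation that the finite intersection property is the precise hypothesis which prevents the generated filter from collapsing by containing the empty set.
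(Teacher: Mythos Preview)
Your argument is correct and is the standard construction of the filter generated by a family with the finite intersection property. The paper states this lemma without proof, treating it as a well-known preliminary fact, so there is nothing to compare against; your write-up would serve perfectly well as the omitted proof.
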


\begin{theorem}
	Let $\f{F}$ be a filter on a set $S$. Then there exists an ultrafilter in $\beta S$ which contains $\f{F}.$
\end{theorem}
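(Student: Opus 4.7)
The plan is to apply Zorn's lemma to the poset
\[ \mathcal{Z} := \{\f{G} \subseteq \mathcal{P}(S) : \f{G} \text{ is a filter on } S \text{ with } \f{F} \subseteq \f{G}\} \]
ordered by inclusion, and then verify that a maximal element is automatically prime. The set $\mathcal{Z}$ is nonempty because $\f{F} \in \mathcal{Z}$. For a chain $\{\f{G}_\alpha\}_{\alpha \in I}$ in $\mathcal{Z}$, I would check that $\bigcup_\alpha \f{G}_\alpha$ is a filter extending $\f{F}$: upward closure is inherited setwise, $\varnothing$ is excluded from each $\f{G}_\alpha$ hence from the union, and downward directedness holds because any two sets in the union lie in a common $\f{G}_\alpha$ along the chain, where their intersection is already present. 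Zorn's lemma then produces a maximal element $p \in \mathcal{Z}$.

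The heart of the argument is verifying that $p$ is prime, since this is precisely what makes $p$ an ultrafilter and hence an element of $\beta S$. Suppose toward contradiction that $A \cup B \in p$ but $A \notin p$ and $B \notin p$. I claim maximality forces the existence of a $C \in p$ with $C \cap A = \varnothing$. Otherwise every $C \in p$ meets $A$; since finite intersections of members of $p$ lie in $p$ by downward directedness, the family $\{A\} \cup p$ has the finite intersection property. The preceding lemma then extends it to a filter containing $A$ and all of $p$, strictly larger than $p$ since $A \notin p$, contradicting maximality. The same argument produces $D \in p$ with $D \cap B = \varnothing$. Then $C \cap D \in p$, while
\[ (C \cap D) \cap (A \cup B) = (C \cap D \cap A) \cup (C \cap D \cap B) = \varnothing, \]
contradicting downward directedness of $p$ applied to $C \cap D$ and $A \cup B$ (whose intersection cannot be empty, lest $\varnothing \in p$).

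The main obstacle I anticipate is the primality step, and specifically the translation of the inclusion statement ``$A \notin p$'' into the concrete combinatorial obstruction ``some $C \in p$ is disjoint from $A$.'' The preceding lemma on families with the finite intersection property is exactly the bridge between these two formulations, and it is what converts maximality of $p$ as a filter into primality. Everything else — verifying the chain condition and setting up the Zorn application — is routine bookkeeping.
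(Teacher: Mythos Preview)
Your argument is correct and is the standard Zorn's lemma proof of the ultrafilter lemma. The paper itself does not prove this theorem; it simply states it as a known preliminary fact (together with the preceding lemma on the finite intersection property) before drawing the corollary that any family with the finite intersection property extends to an ultrafilter. So there is no paper proof to compare against, but your write-up is sound: the chain verification is routine, and your primality step is exactly the right one --- using maximality together with the preceding lemma to convert ``$A \notin p$'' into ``some $C \in p$ misses $A$,'' then intersecting to reach a contradiction with $A \cup B \in p$.
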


From the two preceding theorems, we immediately have the following corollary.
\begin{corollary}
	Let $S$ be a set and let $\f{A} \subsetneq \f{P}\left(S\right)$ have the finite intersection property. Then there is an ultrafilter which contains $\f{A}.$
\end{corollary}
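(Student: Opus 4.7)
The plan is to prove the corollary by chaining the two preceding results together. First I would invoke the lemma on the family $\f{A}$: since $\f{A}$ has the finite intersection property by hypothesis, the lemma produces a filter $\f{F}$ on $S$ with $\f{A} \subseteq \f{F}$. Next I would apply the theorem to this filter $\f{F}$, obtaining an ultrafilter $p \in \beta S$ with $\f{F} \subseteq p$. The transitivity of containment then yields $\f{A} \subseteq \f{F} \subseteq p$, so $p$ is the desired ultrafilter containing $\f{A}$.

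There is no genuine obstacle to overcome here, which is why the text bills this as immediate. The hypothesis $\f{A} \subsetneq \f{P}(S)$ is essentially a consistency condition rather than a working assumption: if $\f{A}$ has the finite intersection property then $\varnothing \notin \f{A}$, so $\f{A}$ is automatically a proper subfamily of $\f{P}(S)$, and the proof goes through exactly as described.
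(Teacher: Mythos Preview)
Your proposal is correct and matches the paper's intent exactly: the paper gives no explicit proof, stating only that the corollary follows immediately from the two preceding results, and your argument---chain the lemma (filter containing $\f{A}$) with the theorem (ultrafilter containing that filter)---is precisely that immediate deduction. Your remark that $\f{A}\subsetneq\f{P}(S)$ is automatic from the finite intersection property is a valid side observation.
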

\begin{definition}[Filter closure]
	Let $\f{F}$ be a filter on a set $S$. Then the \emph{filter closure} of $\f{F}$ is $\overline{ \f{F} }:=\{p\in \beta S: \f{F} \subseteq p\}.$
\end{definition}

An equivalent definition of the filter closure is $\overline{ \f{F} }:= \bigcap_{V \in \f{F}}\overline{V}.$ Be aware that, while we use the same notation, $\overline{ \f{F} }$ is \emph{not} the topological closure of $\f{F}.$

If $T$ is a closed nonempty subset of $\beta S$, then there is a filter $\f{F}$ with $T = \overline{ \f{F} }.$ If $\f{F} = \{S\}$ then $\overline{\f{F}}= \beta S$.
The ultrafilters which are not principal are called \emph{non-principal ultrafilters}. There are many non-principal ultrafilters and many ways to characterize them. They are the ultrafilters which can only be defined using the axiom of choice. They are also the ultrafilters which have no countable filter base. They are also the ultrafilters with no finite sets. If $\f{F}=\{A \subseteq S: \ \left| A^c \right|<\infty\}$ then the non-principal ultrafilters are the ultrafilters in $\overline{ \f{F} }$. The filter $\{A \subseteq S: \ \left|A^c\right| < \infty\}$ is known as the cofinite or Fr\'{e}chet filter. See \cite{CN} for more on ultrafilters.

In the case when $S$ is a semigroup, there is an associative binary operation on $\beta S$ which extends the semigroup operation of $S$.
\begin{definition}[Filter product]
	Let $\f{F}$ and $\f{G}$ be filters on a set $S$. Define $$\f{F}\cdot \f{G}:=\{A \in \f{P}\left(S\right): \ \{x \in S: \ x^{-1}A \in \f{G}\}\in \f{F}\}.$$
\end{definition}
For any two filters, $\f{F}$ and $\f{G},$ on a semigroup $S$, $\f{F} \cdot \f{G}$ is a filter and the operation is associative. As well, $\f{F} \cdot \f{G}$ is an ultrafilter  if $\f{F}$ and $\f{G}$ are ultrafilters. This product restricted to $\beta S$ extends the semigroup operations of $S$ meaning that $x \cdot y = xy$ for principal ultrafilters $x,y \in S.$ (The semigroup operation is represented by juxtaposition.)
Under this operation, $\beta S$ is a Hausdorff compact right topological semigroup, where right topological means that for all $q \in \beta S$ the map $p \mapsto p \cdot q$ is continuous.

\begin{theorem}[Ellis' Theorem]\cite{ellis}
	\label{EN}
	Let $S$ be a Hausdorff compact right topological semigroup. Then $S$ contains an idempotent element.
\end{theorem}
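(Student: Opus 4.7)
The plan is to run the classical Zornification argument: find a minimal nonempty closed subsemigroup of $S$, pick any point in it, and use right continuity together with minimality twice to force that point to be an idempotent.

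First I would consider the family $\mathcal{M}$ of all nonempty closed subsemigroups of $S$, ordered by reverse inclusion. This family is nonempty because $S \in \mathcal{M}$. Given a chain $\mathcal{C} \subseteq \mathcal{M}$ (under reverse inclusion, i.e.\ a descending chain under inclusion), the intersection $T_0 := \bigcap \mathcal{C}$ is closed, and is nonempty because $S$ is compact and the chain has the finite intersection property. It is also a subsemigroup since an intersection of subsemigroups is a subsemigroup. Hence every chain has an upper bound in the reverse-inclusion order, and Zorn's lemma produces a minimal (with respect to inclusion) nonempty closed subsemigroup $T$ of $S$.

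Next I would fix any $s \in T$ and apply minimality twice. For the first application, consider $Ts = \{ts : t \in T\}$. Since right multiplication $\rho_s$ is continuous and $T$ is compact, $Ts = \rho_s(T)$ is compact, hence closed in the Hausdorff space $S$. It is contained in $T$ because $T$ is a subsemigroup, and it is itself a subsemigroup: $(t_1 s)(t_2 s) = (t_1 s t_2)s \in Ts$ because $t_1 s t_2 \in T$. So $Ts$ is a nonempty closed subsemigroup of $T$, and minimality of $T$ forces $Ts = T$. For the second application, set $M := \{t \in T : ts = s\} = T \cap \rho_s^{-1}(\{s\})$. Since $\{s\}$ is closed (Hausdorff) and $\rho_s$ is continuous, $M$ is closed. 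It is nonempty, because the equality $Ts = T$ together with $s \in T$ produces some $t \in T$ with $ts = s$. It is a subsemigroup because $(t_1 t_2)s = t_1(t_2 s) = t_1 s = s$ whenever $t_1, t_2 \in M$. Minimality of $T$ now gives $M = T$, and in particular $s \in M$, which means $s \cdot s = s$.

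The main obstacle to watch out for is the direction of continuity: the proof relies crucially on right continuity in the two places $\rho_s(T)$ is asserted to be compact and $\rho_s^{-1}(\{s\})$ is asserted to be closed. A left-topological hypothesis would not suffice, since we need $\rho_s$ to be the continuous map. The other subtlety is verifying that $Ts$ is a subsemigroup, which uses the specific form of its elements together with associativity; this is the only spot where one actually manipulates the semigroup operation, and once it is in place the rest follows formally from compactness, Hausdorffness, and Zorn.
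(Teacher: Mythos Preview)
Your argument is the standard Ellis--Numakura proof and is correct as written; the two applications of minimality are exactly right, and you have correctly flagged the places where right continuity (as opposed to left) is what makes $\rho_s(T)$ compact and $\rho_s^{-1}(\{s\})$ closed.

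There is nothing to compare against here: the paper does not supply its own proof of Ellis' Theorem but simply cites \cite{ellis}. Your write-up is precisely the classical argument one would find in that reference or in \cite{HiSt1}, so it is entirely appropriate.
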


\begin{definition}[Ideal of a semigroup]
	Let $S$ be a semigroup. Then,
	\begin{itemize}
		\item $L \subseteq S$ is a \emph{left ideal} of $S$ if and only if $L \ne \varnothing$ and $SL \subseteq L.$
		\item $R \subseteq S$ is a \emph{right ideal} of $S$ if and only if $R \ne \varnothing$ and $RS \subseteq R.$
		\item $I \subseteq S$ is an \emph{ideal} of $S$ if it is both a left and right ideal.
	\end{itemize}
\end{definition}
\begin{definition}[Minimal ideals]
	Let $S$ be a semigroup. Then,
	\begin{itemize}
		\item ($L \subseteq S$ is a \emph{minimal left ideal} of $S$) if and only if ($L$ is a left ideal of $S$ and if $J$ is a left ideal of $S$ with $J\subseteq L$ then $J = L$).
		\item ($R \subseteq S$ is a \emph{minimal right ideal} of $S$) if and only if ($R$ is a right ideal of $S$ and if $J$ is a right ideal of $S$ with $J \subseteq R$ then $J =R$).
		\item ($I \subseteq S$ is a \emph{minimal ideal} of $S$) if and only if ($I$ is an ideal and if $J$ is an ideal of $S$ with $J \subseteq I$ then $J=I$).
	\end{itemize}
\end{definition}

\begin{theorem} \cite[Lemma 1.49]{HiSt1}
	Let $S$ be a semigroup and let $K$ be an ideal of $S.$ If $K$ is minimal in $\{J: \ J \ \textrm{is an ideal of} \ S\}$ and $I$ is an ideal of $S$, then $K \subseteq I.$
\end{theorem}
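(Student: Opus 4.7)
The plan is to show that $K \cap I$ is itself an ideal of $S$ that is contained in $K$, and then invoke minimality of $K$ to conclude $K \cap I = K$, which gives $K \subseteq I$.

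First I would verify that $K \cap I$ is nonempty. The natural candidate to place inside it is the product $KI$. Since $I \subseteq S$ and $K$ is a right ideal, $KI \subseteq KS \subseteq K$; since $K \subseteq S$ and $I$ is a left ideal, $KI \subseteq SI \subseteq I$. Thus $KI \subseteq K \cap I$. Because $K$ and $I$ are both nonempty (by the definition of ideal), $KI$ is nonempty, so $K \cap I \neq \varnothing$. This is the step where one genuinely uses that we are dealing with a semigroup rather than just a set-theoretic intersection.

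Next I would check that $K \cap I$ is closed under left and right multiplication by elements of $S$. For any $s \in S$ and $x \in K \cap I$, the element $sx$ lies in $K$ (as $K$ is a left ideal) and in $I$ (as $I$ is a left ideal), so $sx \in K \cap I$. The right-ideal condition is symmetric. Combined with nonemptiness, this shows $K \cap I$ is an ideal of $S$.

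Finally, $K \cap I$ is an ideal contained in $K$, so minimality of $K$ among ideals forces $K \cap I = K$. This is exactly the statement $K \subseteq I$. There is no real obstacle here beyond remembering to use the semigroup operation (rather than only set inclusions) to see that the intersection is nonempty; once that is in hand, the rest of the argument is immediate from the definitions.
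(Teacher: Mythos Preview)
Your proof is correct and is precisely the standard argument: show $K\cap I$ is a nonempty two-sided ideal using $KI\subseteq K\cap I$, then apply minimality. The paper itself does not give a proof of this statement but merely cites \cite[Lemma 1.49]{HiSt1}, where the same argument appears.
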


This allows us to conclude that if $S$ has a minimal ideal it must be unique. We use $K\left(S\right)$ to denote the minimal ideal of $S.$ The next result gives us a simple condition which guarantees the existence of a minimal ideal.
\begin{theorem} \label{uli} \cite[Theorem 1.51]{HiSt1}
	Let $S$ be a semigroup. If $S$ has a minimal left ideal, then $K\left(S\right)$ exists and $K\left(S\right) = \cup\{L: \ L \ \textrm{is a minimal left ideal of} \ S\}.$
\end{theorem}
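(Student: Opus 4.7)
The plan is to let $U=\bigcup\{L : L \text{ is a minimal left ideal of } S\}$, show that $U$ is a two-sided ideal, and then show it is contained in every ideal of $S$ (so by the preceding cited theorem it coincides with $K(S)$). The hypothesis gives us $U \ne \varnothing$.

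First I would verify that $U$ is a left ideal: it is a nonempty union of left ideals, and for any $s \in S$ and $u \in U$, if $u \in L$ for some minimal left ideal $L$, then $su \in L \subseteq U$.

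The main work is showing $U$ is a right ideal. Fix $u \in U$, pick a minimal left ideal $L$ with $u \in L$, and fix $s \in S$. The candidate set is $Ls$, which is a left ideal since $S(Ls)=(SL)s \subseteq Ls$. I claim $Ls$ is itself a minimal left ideal; this will put $us \in Ls \subseteq U$. To prove minimality, let $M$ be any left ideal of $S$ with $M \subseteq Ls$ and set
\[
N := \{x \in L : xs \in M\}.
\]
Then $N \ne \varnothing$ because $M$ is nonempty and every element of $M$ has the form $xs$ for some $x \in L$. Also $N$ is a left ideal of $S$: if $t \in S$ and $x \in N$, then $tx \in SL \subseteq L$ and $(tx)s = t(xs) \in SM \subseteq M$, hence $tx \in N$. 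Minimality of $L$ forces $N = L$, and therefore $Ls = \{xs : x \in L\} \subseteq M$, giving $M = Ls$. So $Ls$ is a minimal left ideal, $us \in U$, and $U$ is a two-sided ideal.

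Finally, to identify $U$ with $K(S)$, I would show that every ideal $I$ of $S$ contains every minimal left ideal $L$. The set $L \cap I$ is nonempty (since $LI \subseteq L \cap I$ and $LI \ne \varnothing$), and it is a left ideal of $S$ because $L$ and $I$ are both left ideals. Minimality of $L$ gives $L \cap I = L$, i.e.\ $L \subseteq I$. Taking the union over all minimal left ideals yields $U \subseteq I$ for every ideal $I$ of $S$. In particular $U$ is minimal among ideals (any ideal $J \subseteq U$ must also satisfy $U \subseteq J$), so by the preceding theorem $K(S)$ exists and equals $U$.

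The step I expect to require the most care is the verification that $Ls$ is minimal as a left ideal; the pullback construction $N = \{x \in L : xs \in M\}$ is the crucial device, and one has to be careful that $N$ inherits being a left ideal of $S$ (not just of $L$) and is nonempty before invoking minimality of $L$. Every other step is a short direct check once that lemma is in place.
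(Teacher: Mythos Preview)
The paper does not actually supply a proof of this theorem; it simply cites \cite[Theorem 1.51]{HiSt1} and moves on. So there is no in-paper argument to compare against, and your proposal should be judged on its own merits.

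Your argument is the standard one and is correct in outline: show that the union $U$ of minimal left ideals is a two-sided ideal by proving that $Ls$ is again a minimal left ideal (the pullback $N=\{x\in L:xs\in M\}$ is exactly the right device here), and then show $U$ lies inside every ideal. There is one slip to fix in the last paragraph: you write $LI\subseteq L\cap I$, but $LI\subseteq L$ would require $L$ to be a \emph{right} ideal, which is not assumed. Use $IL$ instead: since $L$ is a left ideal we have $IL\subseteq SL\subseteq L$, and since $I$ is a (two-sided) ideal we have $IL\subseteq IS\subseteq I$, so $IL\subseteq L\cap I$ and $IL\ne\varnothing$. With that correction the proof goes through.
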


\begin{theorem}\cite[Corollary 2.6]{HiSt1}\label{crt}
	Let $S$ be a compact right topological semigroup. Then every left ideal of $S$ contains a minimal left ideal. Minimal left ideals are closed, and each minimal left ideal has an idempotent.
\end{theorem}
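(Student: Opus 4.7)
The plan is to establish the three assertions in the order $(i)$ every minimal left ideal is closed; $(ii)$ every left ideal contains a minimal left ideal; $(iii)$ every minimal left ideal contains an idempotent. The central observation to exploit first is that for each $a \in S$ the principal left ideal $Sa$ is closed. Indeed, because $S$ is right topological, the map $\rho_a : p \mapsto pa$ is continuous, so $Sa = \rho_a(S)$ is the continuous image of a compact Hausdorff space; thus $Sa$ is compact, hence closed, and clearly a left ideal of $S$.

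For $(i)$, if $L$ is a minimal left ideal and $a \in L$, then $Sa$ is a left ideal contained in $L$, so minimality forces $L = Sa$, which is closed. For $(ii)$, let $L$ be any left ideal of $S$ and let $\f{M}$ denote the family of all closed left ideals of $S$ contained in $L$, partially ordered by reverse inclusion. This family is nonempty, since $Sa \in \f{M}$ for any $a \in L$. Given a chain $\f{C} \subseteq \f{M}$, the intersection $\bigcap \f{C}$ is nonempty by the finite intersection property in the compact space $S$; it is closed and a left ideal, hence an upper bound for $\f{C}$ in $\f{M}$. Zorn's Lemma then produces a minimal element $M \in \f{M}$. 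To upgrade $M$ to a minimal left ideal of $S$, let $J \subseteq M$ be any left ideal and pick $a \in J$: then $Sa$ is a closed left ideal contained in $L$, so $Sa \in \f{M}$ and $Sa \subseteq M$. Minimality of $M$ forces $Sa = M$, and therefore $M = Sa \subseteq J \subseteq M$, giving $J = M$.

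For $(iii)$, a minimal left ideal $L$ is closed by $(i)$, hence compact, and since $LL \subseteq SL \subseteq L$ it is a subsemigroup of $S$. The right-continuity of multiplication is inherited from $S$, so $L$ is itself a compact right topological semigroup, and Ellis' Theorem (Theorem \ref{EN}) supplies an idempotent in $L$. The most delicate point in the argument is the upgrade step in $(ii)$: one must verify that an object which is minimal among the \emph{closed} left ideals inside $L$ is in fact minimal among \emph{all} left ideals of $S$, and this is precisely where the closedness of $Sa$ is invoked a second time.
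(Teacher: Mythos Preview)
The paper does not supply its own proof of this statement; it is quoted as \cite[Corollary 2.6]{HiSt1} and used without argument. Your proof is correct and is essentially the standard argument one finds in Hindman--Strauss: the key ingredients are that $Sa=\rho_a(S)$ is compact (hence closed, under the ambient Hausdorff assumption implicit in the paper's setting and explicit in Theorem~\ref{EN}), a Zorn argument on closed left ideals followed by the upgrade to minimality among all left ideals via $Sa$, and finally an appeal to Ellis' Theorem on the resulting compact subsemigroup. One small remark: the theorem as stated here omits the word ``Hausdorff,'' but you correctly (and necessarily) invoke it when concluding that compact images are closed and when applying Theorem~\ref{EN}; this is consistent with the conventions in \cite{HiSt1} and with the rest of the paper.
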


We can conclude that for any semigroup, $S,$ $K\left(\beta S\right)$ exists and contains an idempotent. In addition, if $\f{F}$, a filter on a semigroup $S,$ has the property that $\overline{ \f{F} }$ is a subsemigroup of $\beta S,$ then $K\left(\overline{ \f{F} }\right)$ exists and contains an idempotent. We call idempotents in $K\left(\beta S\right)$ (or $K\left( \overline{ \f{F} }\right) $) minimal idempotents and denote the set of all minimal idempotents by $E\left( K\left( \beta S\right) \right)$ (or $E\left( K\left( \overline{\f{F}}\right) \right)$).

We can characterize piecewise syndetic sets by the way they relate to minimal idempotents of $\beta S$.
\begin{theorem}\cite[Theorem 4.39]{HiSt1}
	Let $S$ be a semigroup and let $p \in \beta S$. The following statements are equivalent:
	\begin{enumerate}
		\item $p \in K\left( \beta S\right).$
		\item For all $A \in p,$ $\{x \in S: \ x^{-1}A \in p\}$ is syndetic.
		\item For all $q \in \beta S$, $p \in \beta S \cdot q \cdot p.$
	\end{enumerate}
\end{theorem}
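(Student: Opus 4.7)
My plan is to prove $(1)\Leftrightarrow(3)$ from pure ideal theory and then close the loop via $(2)\Leftrightarrow(3)$. The main preliminary I would record is the tautology immediate from the filter product definition: for any $r,t\in \beta S$ and $C\subseteq S$,
\[
 C\in r\cdot t \iff \{x\in S:x^{-1}C\in t\}\in r.
\]
In particular, fixing $A\subseteq S$ and setting $B:=\{x\in S:x^{-1}A\in p\}$, this reads $B\in r \iff A\in r\cdot p$; this ``dictionary'' is what lets me translate between the combinatorics of $A,B$ and the algebra of $\beta S$.

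For $(1)\Leftrightarrow(3)$ I would argue using ideal theory. If $p\in K(\beta S)$, Theorem \ref{uli} puts $p$ in some minimal left ideal $M$; the nonempty left ideal $\beta S\cdot p\subseteq M$ equals $M$ by minimality, and for any $q$ the nonempty left ideal $\beta S\cdot q\cdot p\subseteq \beta S\cdot p=M$ equals $M$ too, so $p\in \beta S\cdot q\cdot p$. For the converse, (3) applied to any $q\in K(\beta S)$ gives $p\in \beta S\cdot q\cdot p\subseteq K(\beta S)$, using that $K(\beta S)$ is a two-sided ideal.

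For $(3)\Rightarrow(2)$, fix $A\in p$ and let $B$ be as above. For each $q\in \beta S$, (3) supplies $s\in \beta S$ with $sqp=p$; then $A\in p=(sq)\cdot p$, so by the dictionary $B\in sq$, hence again by the dictionary $\{x\in S:x^{-1}B\in q\}\in s$ is nonempty, so some $x\in S$ has $q\in \overline{x^{-1}B}$. Thus the clopen sets $\{\overline{x^{-1}B}:x\in S\}$ cover $\beta S$; compactness of $\beta S$ gives a finite $H\in \fs{S}$ with $\beta S=\bigcup_{h\in H}\overline{h^{-1}B}$, and restricting to principal ultrafilters gives $S=\bigcup_{h\in H}h^{-1}B$, so $B$ is syndetic.

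For $(2)\Rightarrow(3)$, fix $q\in \beta S$. I would first observe that $\beta S\cdot q\cdot p=\rho_p(\rho_q(\beta S))$ is the continuous image of compact $\beta S$ (using that $\beta S$ is right topological), hence closed in Hausdorff $\beta S$; so to prove $p\in \beta S\cdot q\cdot p$ it suffices to show each basic neighbourhood $\overline{A}$ of $p$ (for $A\in p$) meets it. Given such $A$, (2) yields $H\in \fs{S}$ with $S=\bigcup_{h\in H}h^{-1}B$; since $S\in q$, primality delivers $h\in H$ with $h^{-1}B\in q$, i.e.\ $B\in hq$, i.e.\ (dictionary) $A\in hqp$, placing $hqp\in \overline{A}\cap \beta S\cdot q\cdot p$. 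The part I expect to need the most care is this $(2)\Leftrightarrow(3)$ bridge: the identity $B\in r\iff A\in r\cdot p$ is the load-bearing step, and once it is in place one direction becomes a compactness argument in $\beta S$ and the other becomes the closedness of $\beta S\cdot q\cdot p$.
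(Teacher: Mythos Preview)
The paper does not give its own proof of this theorem; it is simply quoted from Hindman--Strauss as \cite[Theorem 4.39]{HiSt1} and used as background, so there is nothing in the paper to compare your proposal against.

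That said, your argument is correct and is essentially the standard proof one finds in \cite{HiSt1}. The $(1)\Leftrightarrow(3)$ equivalence via minimal left ideals is exactly right; the $(3)\Rightarrow(2)$ direction as a compactness-of-$\beta S$ covering argument and the $(2)\Rightarrow(3)$ direction via closedness of $\beta S\cdot q\cdot p$ (continuous image of a compact set in a Hausdorff space) together with the clopen-basic-neighbourhood criterion are the textbook steps. Your ``dictionary'' $B\in r\iff A\in r\cdot p$ is indeed the load-bearing identity, and you deploy it correctly in both directions. One tiny remark: in $(3)\Rightarrow(1)$ you pick $q\in K(\beta S)$, which tacitly uses that $K(\beta S)\neq\varnothing$; this is established just above in the paper (Theorems \ref{uli} and \ref{crt}), so there is no gap.
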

\begin{theorem}\cite[Theorem 4.40]{HiSt1}
	Let $S$ be a semigroup and let $A \subseteq S.$ Then $\overline{A} \cap K\left( \beta S\right) \ne \varnothing$ if and only if $A$ is piecewise syndetic.
\end{theorem}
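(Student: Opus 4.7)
The plan is to prove both implications using the prior characterization of $K(\beta S)$ and the structural theorems for compact right topological semigroups.

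For the forward direction, I would suppose $p \in \overline{A} \cap K(\beta S)$. Then $A \in p$, so by the previous theorem the set $B := \{x \in S : x^{-1}A \in p\}$ is syndetic. Choose $H \in \fs{S}$ with $S = \bigcup_{h \in H} h^{-1}B$. For any $x \in S$, pick $h \in H$ with $hx \in B$; then $(hx)^{-1}A \in p$, and since
\[
x^{-1}\Bigl(\bigcup_{h \in H} h^{-1}A\Bigr) = \bigcup_{h \in H}(hx)^{-1}A \supseteq (hx)^{-1}A,
\]
this union lies in $p$. Hence every member of the family $\{x^{-1}(\bigcup_{h \in H} h^{-1}A) : x \in S\}$ belongs to $p$, so the family has the finite intersection property and $A$ is piecewise $\f{F}$-syndetic.

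For the reverse direction, suppose $A$ is piecewise syndetic with witness $H$, and write $B := \bigcup_{h \in H} h^{-1}A$. Since $\{x^{-1}B : x \in S\}$ has the finite intersection property, there is an ultrafilter $q \in \beta S$ with $x^{-1}B \in q$ for all $x \in S$. The key calculation is that $\{x \in S : x^{-1}B \in q\} = S$, so for every $p \in \beta S$ we have $B \in p \cdot q$, i.e. $\beta S \cdot q \subseteq \overline{B}$. Because $\beta S \cdot q$ is a left ideal of $\beta S$, Theorem \ref{crt} gives a minimal left ideal $L \subseteq \beta S \cdot q$, and Theorem \ref{uli} puts $L \subseteq K(\beta S)$; also, $L$ contains an idempotent $e$.

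All that remains is to pass from $\overline{B}$ to $\overline{A}$. Since $B \in e$ and $B = \bigcup_{h \in H} h^{-1}A$ is a finite union, primality of $e$ yields some $h \in H$ with $h^{-1}A \in e$, which is to say $A \in h \cdot e$. Because $L$ is a left ideal, $h \cdot e \in L \subseteq K(\beta S)$, so $h \cdot e \in \overline{A} \cap K(\beta S)$, as required. The main obstacle is this last bookkeeping step: the ultrafilter $q$ produced from the finite intersection property naturally lives in $\overline{B}$ rather than $\overline{A}$, and one must exploit primality together with the left-ideal structure of $L$ to translate back by a single element of $H$ and land in $\overline{A}$ while staying in $K(\beta S)$.
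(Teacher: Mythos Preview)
Your argument is essentially correct, and it is the standard one (found, e.g., in Hindman--Strauss). Note, however, that the paper does \emph{not} supply its own proof of this theorem: it is simply quoted from \cite[Theorem 4.40]{HiSt1} as background, so there is nothing in the paper to compare against directly.

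A few small remarks on your write-up:
\begin{itemize}
\item At the end of the forward direction you wrote ``piecewise $\f{F}$-syndetic''; this should be ``piecewise syndetic'' (no filter is in play here).
\item In the reverse direction you invoke an idempotent $e \in L$, but you never use the idempotent property: any $r \in L$ would do, since $L \subseteq \beta S \cdot q \subseteq \overline{B}$ already forces $B \in r$, and then primality plus the left-ideal property of $L$ give $h \cdot r \in \overline{A} \cap L \subseteq \overline{A} \cap K(\beta S)$. Mentioning the idempotent is harmless but unnecessary.
\item Your identification of the ``main obstacle'' is apt: the translation trick $h^{-1}A \in e \Rightarrow A \in h \cdot e \in L$ is exactly the point where one must remember that $K(\beta S)$ is a union of left ideals, so left-multiplying by an element of $S$ keeps you inside the smallest ideal.
\end{itemize}
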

\begin{remark}
	We use $A'\left( p\right)$ to denote the set $\{x \in S: \ x^{-1}A \in p\}$.
\end{remark}
\begin{theorem}\cite[Theorem 4.43]{HiSt1}\label{aps}
	Let $S$ be an infinite semigroup and let $A \subseteq S$. The following statements are equivalent:
	\begin{enumerate}
		\item $A$ is piecewise syndetic
		\item There is a minimal idempotent, $e$, such that $\{x \in S: \ x^{-1}A \in e\}$ is syndetic.
		\item There is a minimal idempotent, $e,$ and some $x \in S$ such that $x^{-1}A \in e$. 
	\end{enumerate}
\end{theorem}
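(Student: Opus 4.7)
The plan is to prove the cycle $(1) \Rightarrow (3) \Rightarrow (2) \Rightarrow (1)$. For $(1) \Rightarrow (3)$: piecewise syndeticity of $A$ together with \cite[Theorem 4.40]{HiSt1} yields some $p \in K\left(\beta S\right) \cap \overline{A}$. By Theorem \ref{uli}, $p$ lies in some minimal left ideal $L$ of $\beta S$, and Theorem \ref{crt} guarantees that $L$ contains an idempotent $e$, which is necessarily a minimal idempotent since $L \subseteq K\left(\beta S\right)$. The key structural observation is $L = \beta S \cdot e$: the set $\beta S \cdot e$ is a left ideal of $\beta S$ contained in $L$ (because $e \in L$ and $L$ is a left ideal) and contains $e = e \cdot e$, so minimality of $L$ forces equality. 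Writing $p = q \cdot e$ for some $q \in \beta S$ and unfolding the filter-product definition at $A \in p = q \cdot e$ gives $\{x \in S : x^{-1}A \in e\} \in q$. In particular this set is nonempty, and any element of it witnesses $(3)$.

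For $(3) \Rightarrow (2)$: let $B := x^{-1}A$, so $B \in e$. Since $e$ is a minimal idempotent, \cite[Theorem 4.39]{HiSt1} applied to $B$ and $e$ shows that $B'\left(e\right) = \{y \in S : y^{-1}B \in e\}$ is syndetic. The identity $y^{-1}B = y^{-1}(x^{-1}A) = (xy)^{-1}A$ rewrites this set as $\{y \in S : xy \in A'\left(e\right)\} = x^{-1}A'\left(e\right)$, so $x^{-1}A'\left(e\right)$ is syndetic. If $S = \bigcup_{h \in H} h^{-1}(x^{-1}A'\left(e\right)) = \bigcup_{h \in H}(xh)^{-1}A'\left(e\right)$ for some finite $H \in \fs{S}$, then the finite set $H' := \{xh : h \in H\}$ witnesses syndeticity of $A'\left(e\right)$.

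For $(2) \Rightarrow (1)$: since $A'\left(e\right)$ is syndetic it is in particular nonempty, so fix $x \in A'\left(e\right)$; then $x^{-1}A \in e$ and hence $A \in x \cdot e$. Because $K\left(\beta S\right)$ is a two-sided ideal, $xe \in K\left(\beta S\right) \cap \overline{A}$, and \cite[Theorem 4.40]{HiSt1} then delivers piecewise syndeticity of $A$.

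The main obstacle I anticipate is the bookkeeping in $(3) \Rightarrow (2)$: the rewrite $B'\left(e\right) = x^{-1}A'\left(e\right)$ and the transfer of syndeticity from the left translate $x^{-1}A'\left(e\right)$ back to $A'\left(e\right)$ are where the argument earns its keep. The $(1) \Rightarrow (3)$ step further relies on the standard identification $L = \beta S \cdot e$ for a minimal left ideal containing an idempotent $e$; beyond these two points, everything reduces to routine manipulation of the filter product and the ideal structure of $K\left(\beta S\right)$.
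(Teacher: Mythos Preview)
Your proof is correct, but it differs from the paper's approach in a meaningful way. The paper only supplies a proof of $(1)\Rightarrow(2)$ directly (deferring $(2)\Rightarrow(3)$ as trivial and $(3)\Rightarrow(1)$ to \cite{HiSt1}), and it does so by showing that $\overline{A'\left(e\right)}$ meets every left ideal $\beta S\cdot u$: starting from $p\in K\left(\beta S\right)\cap\overline{A}$ and a minimal left ideal $L=\beta S\cdot e\subseteq\beta S\cdot p$, the paper uses the implication $(1)\Rightarrow(3)$ of \cite[Theorem~4.39]{HiSt1} (that $p\in\beta S\cdot q\cdot p$ for every $q$) to write $p=r\cdot u\cdot e$ and conclude $A'\left(e\right)\in r\cdot u$, then invokes \cite[Theorem~4.48]{HiSt1} for syndeticity. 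You instead route through $(3)$: you get $(1)\Rightarrow(3)$ by the same $L=\beta S\cdot e$ observation, and then prove $(3)\Rightarrow(2)$ by applying the implication $(1)\Rightarrow(2)$ of \cite[Theorem~4.39]{HiSt1} to $B=x^{-1}A\in e$ and transporting syndeticity from $x^{-1}A'\left(e\right)$ to $A'\left(e\right)$ via the finite set $\{xh:h\in H\}$. Your argument is slightly more elementary in that it avoids \cite[Theorem~4.48]{HiSt1} and makes the role of the idempotent more transparent; the paper's argument, on the other hand, yields the stronger intermediate conclusion that $\overline{A'\left(e\right)}$ meets \emph{every} left ideal, which is exactly what gets reused later in the filter-relative setting.
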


This theorem is slightly different from the referenced theorem found in \cite{HiSt1} and it is not clear that they are equivalent. Because of this, we prove that $(1) \Longrightarrow (2).$ That $(2) \Longrightarrow (3)$ is trivial and that $(3) \Longrightarrow (1)$ can be found in \cite{HiSt1}.

\begin{proof}
	Suppose $A$ is piecewise syndetic. Then there exists $p \in K\left(\beta S\right)$ such that $A \in p.$ Since $\beta S \cdot p$ is a left ideal, it contains a minimal left ideal $L.$ There is an idempotent $e \in L$ with $L = \beta S \cdot e.$ We will show that $\overline{A'\left(e\right)}$ meets every left ideal of $\beta S$ such that by \cite[Theorem 4.48]{HiSt1}, $A'\left(e\right)$ is syndetic. Let $u \in \beta S$. Then we have $u \cdot e \in L \subseteq \beta S \cdot p.$ This allows us to pick $q \in \beta S$ such that $u \cdot e = q\cdot p.$ By \cite[Theorem 4.39]{HiSt1}, there is an $r \in \beta S$ such that $p = r\cdot q \cdot p = r \cdot u \cdot e.$ Thus $A \in p = r \cdot u \cdot e$ and then $A'\left(e\right) \in r\cdot u.$ Hence, for all $u\in \beta S,$ $\overline{A'\left(e\right)} \bigcap \beta S \cdot u \ne \varnothing.$ In particular, $\overline{A'\left(e\right)}$ meets every minimal left ideal of $\beta S.$
\end{proof}

\begin{lemma}\label{il}
	Let $p \in \beta S$ and $A \subseteq S.$ Then $\left( A'\left(p\right)\right) ^c = \left( A^c\right) '\left(p\right).$
\end{lemma}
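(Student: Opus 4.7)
The plan is to unwind both sides of the equation to membership conditions and then use the fact that $p$ is an ultrafilter. By definition, $\left(A'(p)\right)^c = \{x \in S : x^{-1}A \notin p\}$ while $(A^c)'(p) = \{x \in S : x^{-1}A^c \in p\}$. So the lemma reduces to showing that for each $x \in S$,
\[
x^{-1}A \notin p \iff x^{-1}A^c \in p.
\]

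The key intermediate observation I would establish first is the set-theoretic identity $x^{-1}A^c = (x^{-1}A)^c$. This follows from a direct unpacking: $y \in x^{-1}A^c$ means $xy \in A^c$, i.e.\ $xy \notin A$, which is exactly $y \notin x^{-1}A$. This does not use anything about $p$ and holds for any $x \in S$ and $A \subseteq S$.

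Once the complementation identity is in hand, the rest is an appeal to primality (discussed just after the definition of ultrafilters): since $p$ is an ultrafilter, for any subset $B \subseteq S$ exactly one of $B$ and $B^c$ belongs to $p$. Applying this with $B = x^{-1}A$ gives $x^{-1}A \notin p \iff (x^{-1}A)^c \in p \iff x^{-1}A^c \in p$, which is exactly the pointwise equivalence we needed. Taking the set of all $x \in S$ satisfying this equivalent condition yields the claimed equality of sets.

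There is no real obstacle here; the whole argument is a one-line verification once one notes that pullback by $x$ commutes with complementation. The only thing to be mildly careful about is to avoid conflating $A'(p)^c$ (the complement taken in $S$) with any notion of complement inside $\beta S$; the statement is about subsets of $S$, and both sides live in $\mathcal{P}(S)$, so the formal manipulation is unambiguous.
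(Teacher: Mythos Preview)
Your argument is correct and is exactly the paper's approach: the paper's proof is the single displayed line $(A'(p))^c = \{x \in S : x^{-1}A \notin p\} = \{x \in S : x^{-1}A^c \in p\} = (A^c)'(p)$, which implicitly uses the same two facts you make explicit, namely $x^{-1}A^c = (x^{-1}A)^c$ and the ultrafilter dichotomy $B \notin p \iff B^c \in p$.
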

\begin{proof}
	\[\left( A'\left(p\right)\right) ^c = \{x \in S: \ x^{-1}A \notin p\} = \{x \in S: \ x^{-1}A^c \in p\} = \left( A^c\right) '\left(p\right)\]
\end{proof}
\begin{lemma}\label{ids}
	For any idempotent ultrafilter $e,$ and any $A \subseteq S$ and for all $x \in A'\left( e\right)$ we have $x^{-1}A'\left( e\right) \in e.$
\end{lemma}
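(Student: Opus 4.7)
The plan is to unfold the definitions and exploit the idempotency of $e$ via the filter product. Recall that by definition, $B \in e \cdot e$ precisely when $\{y \in S : y^{-1}B \in e\} \in e$, i.e.\ when $B'(e) \in e$. Since $e$ is idempotent, $e = e \cdot e$, so we obtain the key equivalence: for every $B \subseteq S$, $B \in e$ if and only if $B'(e) \in e$.

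First I would rewrite $x^{-1}A'(e)$ in a form that can be fed into this equivalence. Using associativity of the semigroup operation on $S$, for any $y \in S$ we have $y \in x^{-1}A'(e)$ iff $xy \in A'(e)$ iff $(xy)^{-1}A \in e$ iff $y^{-1}(x^{-1}A) \in e$. Therefore
\[
x^{-1}A'(e) \;=\; (x^{-1}A)'(e).
\]
This identity is the only non-cosmetic manipulation in the argument.

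Now the hypothesis $x \in A'(e)$ says exactly that $x^{-1}A \in e$. Applying the key equivalence with $B = x^{-1}A$, we conclude $(x^{-1}A)'(e) \in e$, and combining with the identity above gives $x^{-1}A'(e) \in e$, as required.

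There is no real obstacle here; the only subtle point is making sure the rewriting $y^{-1}(x^{-1}A) = (xy)^{-1}A$ is justified by associativity of the semigroup operation (so that the preimages agree as subsets of $S$). Everything else is a direct unwinding of the definitions of $A'(e)$ and of the filter product, together with the single algebraic fact $e \cdot e = e$.
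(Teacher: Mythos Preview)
Your proof is correct and follows essentially the same approach as the paper: both use $e = e \cdot e$ applied to $x^{-1}A$ and the identity $x^{-1}A'(e) = (x^{-1}A)'(e) = \{y : y^{-1}x^{-1}A \in e\}$. You simply spell out the associativity step more explicitly than the paper does.
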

\begin{proof}
	Take $x \in A'\left( e\right) = \{x \in S: \ x^{-1}A \in e\}.$ Then, $x^{-1}A \in e = e\cdot e.$ Equivalently, $x^{-1}A'\left( e\right)  =\{y \in S: \ y^{-1}x^{-1}A 	\in e\} \in e$
\end{proof}
We present a new proof of one direction of Theorem \ref{abc}. The proof of the main result, Theorem \ref{cpfs}, will follow this proof with some modifications.

\abc*

\begin{proof}
	Suppose $A$ is piecewise syndetic. Take, by Theorem \ref{aps}, a minimal idempotent ultrafilter, $e,$ such that $A'\left( e\right) $ is syndetic. Then $A \cup A'\left( e\right) $ is also syndetic because supersets of syndetic sets are syndetic.

	By Lemma \ref{ids}, for all $x \in \left( A^{c}\right)'\left( e\right)$, $x^{-1}\left( A^{c}\right)'\left( e\right) \in e. $ Also, for all $x \in A'\left( e\right),$ $x^{-1}A \in e.$ Altogether,
	
	\[\left(\forall x \in S\right) \ \left(x^{-1}\left( A \cup \left( A^{c}\right)'\left( e\right) \right) \in e\right). \]
	
	Hence, $\{x^{-1}\left( A \cup \left( A^{c}\right)'\left( e\right) \right): \ x\in S\}$ has the finite intersection property or equivalently $A \cup \left( A^{c}\right)'\left( e\right)$ is thick. The final point
	
	\[A = \left( A \cup A'\left( e\right)\right)  \cap \left(A \cup \left( A^{c}\right)'\left( e\right)\right). \]
	
	For the converse see \cite[Theorem 4.49]{HiSt1}.
\end{proof}

\section{Algebraic characterizations of filter relative notions of size}
In \cite{CJ}, Christopherson and Johnson use the following useful term which we adopt.
\begin{definition}[Mesh]
	Let $S$ be a nonempty set and let $\f{F} \subseteq \f{P}\left(S\right).$ The \emph{mesh of $\f{F}$} is $\f{F}^* = \{A \subseteq S: \ A^c \notin \f{F}\}.$
\end{definition}

When $\f{F}$ is a filter there is an equivalent definition: $\f{F}^* = \{A \subseteq S: \ B \cap A \ne \varnothing \ \textrm{for all} \ B \in \f{F}\}.$ Again, when $\f{F}$ is a filter, it is useful to think of $\f{F}^*$ as the collection of all sets which may extend $\f{F}$ in the sense that for any $A \in \f{F}^*$, $\{A\} \cup \f{F}$ has the finite intersection property and hence must be contained in an ultrafilter. As a consequence of this perspective, one can see that if $\f{F}$ and $\f{G}$ are filters with $\f{F} \subseteq \f{G}$ then $\f{G}^* \subseteq \f{F}^*.$

In \cite{SZZ}, Shuungula, Zelenyuk and Zelenyuk characterized the closure of the minimal ideal of an aribitrary subsemigroup of $\beta S.$ Crucial to this paper are the filter relative notions of size, $\left( \f{F}, \f{G}\right) $-syndeticity and piecewise $\f{F}$-syndeticity. The notion of $\left( \f{F}, \f{G}\right)$-thick comes from \cite{CJ}.

\begin{definition}[Relatively syndetic, thick, piecwise syndetic]
	Let $S$ be a semigroup, $A \subseteq S,$ and $\f{F}$, $\f{G}$ be filters on $S.$
	\begin{itemize}
		\item We call $A$ \emph{$\left(\f{F},\f{G}\right)$-syndetic} if and only if for every $V \in \f{F}$ there exists $H_V \in \f{P}_f\left(V\right)$ such that $\bigcup_{h \in H_V}h^{-1}A \in \f{G}.$ As well, we use $\syn{\f{F}}{\f{G}}$ to represent the collection of all $\left(\f{F},\f{G}\right)$-syndetic sets.
		\item We call $A$ \emph{$\left(\f{F},\f{G}\right)$-thick} if and only if there exists a $V \in \f{F}$ such that for all $H \in \mathcal{P}_f\left(V\right)$ we have $\bigcap_{h \in H}h^{-1}A \in \f{G}^*.$ As well, we use $\thi{\f{F}}{\f{G}}$ to represent the collection of all $\left(\f{F},\f{G}\right)$-thick sets.
		\item We call $A$ \emph{piecewise $\f{F}$-syndetic} if and only if for all $V \in \f{F}$ there exists $H_V \in \fs{V}$ and $W_{V} \in \f{F}$ such that
		\[\left\{\left( x^{-1}\bigcup_{h \in H_V}h^{-1}A\right)  \cap V: \ V \in \f{F}, \ x \in W_V\right\}\] has the finite intersection property.
	\end{itemize}
\end{definition}

When $\f{G} =\f{F}$ we simply say $\f{F}$-syndetic and $\f{F}$-thick. When $\f{F} = \{S\}$ then these notions are equivalent to the usual notions of size. Piecewise $\f{F}$-syndeticity has been used to achieve combinatorial results in \cite{JR17}.

The next three results are from \cite{CJ}. In that paper, Christopherson and Johnson have presented many results expanding the theory around these relative notions of size. Interestingly, a set $A \subseteq S$ is piecewise $\f{F}$-syndetic if and only if $A \in \syn{\f{F}}{\thi{\f{F}}{q}}$ for some $q \in \overline{ \f{F} }$. While it is not necessarily the case that $\thi{\f{F}}{\f{G}}$ is a filter for arbitrary $\f{F}$ and $\f{G}$, we will see that $\thi{\f{F}}{q} = \f{F} \cdot q$ for any filter $\f{F}$ and ultrafilter $q.$  From this interpretation of piecewise $\f{F}$-syndeticity and from the theorems below, Christopherson and Johnson proved that any set which is the intersection of an $\f{F}$-syndetic set and an $\f{F}$-thick set is piecewise $\f{F}$-syndetic. The converse was left as an open question. \cite[Question 4.6]{CJ}
\begin{theorem}\cite[Proposition 3.2]{CJ}\label{dual}
	Let $\f{F}, \f{G}$ be filters on a semigroup $S.$
	
	$$\syn{\f{F}}{\f{G}} = \left( \thi{\f{F}}{\f{G}}\right)^*.$$
\end{theorem}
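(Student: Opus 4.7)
The plan is to prove the set equality by establishing the membership biconditional $A \in \syn{\f{F}}{\f{G}}$ iff $A \in \left(\thi{\f{F}}{\f{G}}\right)^*$ through a direct unpacking of the definitions. By definition of the mesh, the right-hand condition says $A^c \notin \thi{\f{F}}{\f{G}}$, so after one quantifier negation the equality should reduce to a De~Morgan computation combined with the defining equivalence for $\f{G}^*$.

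First I would record two elementary facts. The identity $h^{-1}(A^c) = (h^{-1}A)^c$ for every $h \in S$ follows immediately from $h^{-1}X = \{x \in S : hx \in X\}$, and applied pointwise it yields, via De~Morgan's law,
\[
\bigcap_{h \in H} h^{-1}(A^c) \;=\; \left(\bigcup_{h \in H} h^{-1}A\right)^{c}
\]
for every $H \in \fs{S}$. Second, directly from the definition of the mesh, $X \notin \f{G}^*$ iff $X^c \in \f{G}$ for any $X \subseteq S$.

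Next I would unwind $A \in \left(\thi{\f{F}}{\f{G}}\right)^*$ step by step. By definition this means $A^c \notin \thi{\f{F}}{\f{G}}$, and negating the existential-then-universal quantifier structure in the definition of $(\f{F},\f{G})$-thick gives: for every $V \in \f{F}$ there exists $H_V \in \fs{V}$ with $\bigcap_{h \in H_V} h^{-1}(A^c) \notin \f{G}^*$. By the two facts above, the inner clause is equivalent to $\bigcup_{h \in H_V} h^{-1}A \in \f{G}$, which is exactly the defining condition of $(\f{F},\f{G})$-syndeticity. Since every step in the chain is a biconditional, the set equality follows.

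I do not anticipate any substantive obstacle here. The argument is a routine manipulation of definitions and quantifier negations; the only real care required is bookkeeping, namely keeping track of how the exchange $A \leftrightarrow A^c$ interacts with the interchange of $\bigcap$ and $\bigcup$ and with the switch between $\f{G}^*$ and $\f{G}$.
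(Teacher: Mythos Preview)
Your argument is correct: the biconditional chain you outline is exactly the right unpacking, and the two elementary facts you record --- $h^{-1}(A^c)=(h^{-1}A)^c$ together with De~Morgan, and $X\notin\f{G}^*\iff X^c\in\f{G}$ --- are precisely what is needed to carry the negation of ``$A^c$ is $(\f{F},\f{G})$-thick'' over to the defining clause of ``$A$ is $(\f{F},\f{G})$-syndetic''.

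Regarding comparison with the paper: the paper does not supply its own proof of this theorem; it merely cites \cite[Proposition~3.2]{CJ}. That said, exactly the computation you describe is carried out implicitly in the first three lines of the equivalence chain in the proof of Theorem~\ref{sup}, where the author passes from the definition of $\syn{\f{F}}{\f{G}}$ to the negated statement $\neg\bigl[(\exists V\in\f{F})(\forall H\in\fs{V})\ \bigcap_{h\in H}h^{-1}A^c\in\f{G}^*\bigr]$, which is precisely $A^c\notin\thi{\f{F}}{\f{G}}$. So your proof matches both the argument in the cited source and the use the present paper makes of it; there is nothing different to compare.
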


\begin{theorem}\cite[Proposition 2.5 (b)]{CJ}
	Let $\f{F}$ be a filter on a set $S.$ Then $\f{F} = \left(\f{F}^*\right)^*.$
\end{theorem}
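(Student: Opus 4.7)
The plan is to prove the equality $\f{F} = \left(\f{F}^*\right)^*$ by a direct definitional unfolding, using only that complementation in $\f{P}\left(S\right)$ is an involution. The mesh was defined purely set-theoretically as $\f{G}^* = \{A \subseteq S: A^c \notin \f{G}\}$ for \emph{any} subcollection $\f{G} \subseteq \f{P}\left(S\right)$, so I expect the identity to come out of a single chain of bi-implications rather than from any of the filter axioms.

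Concretely, I would unfold the outer mesh first to get
\[\left(\f{F}^*\right)^* = \{A \subseteq S: A^c \notin \f{F}^*\},\]
and then unfold the inner one. A set $B \subseteq S$ lies in $\f{F}^*$ iff $B^c \notin \f{F}$, so its \emph{negation} $B \notin \f{F}^*$ is equivalent to $B^c \in \f{F}$. Substituting $B = A^c$ and using $\left(A^c\right)^c = A$ yields $A^c \notin \f{F}^*$ iff $A \in \f{F}$, and hence
\[\left(\f{F}^*\right)^* = \{A \subseteq S: A \in \f{F}\} = \f{F}.\]

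The main obstacle is essentially cosmetic: one has to keep track of which ``$c$'' is being negated and resist the urge to invoke the equivalent filter-only description of $\f{F}^*$ (namely, $A \in \f{F}^*$ iff $A$ meets every member of $\f{F}$), because that formulation would force a detour through upward-closure and downward-directedness. The clean computation above never uses that $\f{F}$ is a filter, which is reassuring and matches the fact that the identity $\left(\f{G}^*\right)^* = \f{G}$ is a purely set-theoretic statement about the involution $A \mapsto A^c$ on $\f{P}\left(S\right)$; the filter hypothesis in the theorem statement simply reflects the context in which the result will be applied.
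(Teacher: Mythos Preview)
Your argument is correct: unfolding the definition $\f{G}^* = \{A \subseteq S : A^c \notin \f{G}\}$ twice and using $\left(A^c\right)^c = A$ immediately gives $\left(\f{F}^*\right)^* = \f{F}$, and indeed no filter axiom is needed. There is nothing to compare against in the paper itself, since it does not supply a proof of this statement but simply cites \cite[Proposition 2.5 (b)]{CJ}; your short computation is exactly the kind of verification one would expect.
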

This establishes a duality between the filter relative notions of syndeticity and thickness that is well known in the non-relative case. The next results generalize \cite[Theorem 4.48]{HiSt1}.
\begin{theorem}\label{sych}\cite[Lemma 3.9 (a')]{CJ}
	Let $\f{F},\f{G}$ be filters on a semigroup $S.$ Then $$\syn{\f{F}}{\f{G}} = \{A\subseteq S: \ \textrm{for every} \ q\in\overline{\f{G}} \ \textrm{we have} \ \overline{\f{F}} \cdot q \cap \overline{A} \ne \varnothing \}.$$
\end{theorem}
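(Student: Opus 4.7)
The plan is to reformulate the right-hand side using the filter product and then prove both inclusions directly. For any $q \in \overline{\f{G}}$, the statement $p \cdot q \in \overline{A}$ unfolds to $A \in p \cdot q$, i.e.\ $A'(q) \in p$; so $\overline{\f{F}} \cdot q \cap \overline{A} \ne \varnothing$ is equivalent to the existence of some ultrafilter $p \supseteq \f{F}$ containing $A'(q)$, which is in turn equivalent to $\f{F} \cup \{A'(q)\}$ having the finite intersection property, i.e.\ to $A'(q) \cap V \ne \varnothing$ for every $V \in \f{F}$. So the theorem reduces to showing that $A \in \syn{\f{F}}{\f{G}}$ if and only if for every $q \in \overline{\f{G}}$ and every $V \in \f{F}$ there is some $h \in V$ with $h^{-1}A \in q$.

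For the forward direction, I would fix $A \in \syn{\f{F}}{\f{G}}$, $q \in \overline{\f{G}}$, and $V \in \f{F}$. The definition of syndeticity produces $H_V \in \fs{V}$ with $\bigcup_{h \in H_V} h^{-1}A \in \f{G} \subseteq q$. Since $q$ is an ultrafilter and hence prime, iterating primality over the finite index set $H_V$ yields some $h \in H_V \subseteq V$ with $h^{-1}A \in q$, which is exactly what the reformulation demands.

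For the reverse direction, I would argue by contrapositive. If $A \notin \syn{\f{F}}{\f{G}}$, pick $V \in \f{F}$ such that $\bigcup_{h \in H} h^{-1}A \notin \f{G}$ for every $H \in \fs{V}$; equivalently, $\bigcap_{h \in H} h^{-1}A^c \in \f{G}^*$ for every such $H$. This says precisely that $\f{G} \cup \{h^{-1}A^c : h \in V\}$ has the finite intersection property, so it extends to an ultrafilter $q \in \overline{\f{G}}$ with $h^{-1}A^c \in q$ for every $h \in V$. By Lemma \ref{il}, $V \subseteq (A^c)'(q) = (A'(q))^c$, so $A'(q) \cap V = \varnothing$. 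By the reformulation above, this witnesses $\overline{\f{F}} \cdot q \cap \overline{A} = \varnothing$, completing the contrapositive.

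The main obstacle is spotting the reformulation via $A'(q)$ together with the use of primality of $q$ that collapses the finite union in the definition of syndeticity to a single translate $h \in V$; this is the point where the combinatorial definition interacts with the algebraic structure of $\beta S$. Once this move is made, the reverse direction is a symmetric FIP-extension argument dualized through $\f{G}^*$, so the proof is essentially bookkeeping.
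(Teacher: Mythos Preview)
Your proof is correct. Note that the paper does not give a direct proof of Theorem~\ref{sych}; it is cited from \cite{CJ}. However, the paper does prove the equivalent reformulation in Theorem~\ref{sup} (the right-hand side there is just a rewriting of $\{A:\ \forall q\in\overline{\f{G}},\ \overline{\f{F}}\cdot q\cap\overline{A}\ne\varnothing\}$), so the natural comparison is with that argument.

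Your reverse direction (the contrapositive FIP extension) is essentially identical to the core of the paper's chain of equivalences in Theorem~\ref{sup}: both negate the syndeticity condition, pass to $\f{G}^*$ via complements, recognize that $\f{G}\cup\{h^{-1}A^c:h\in V\}$ has the finite intersection property, and extend to an ultrafilter $q\in\overline{\f{G}}$. Where you diverge is in the forward direction and in the final packaging. You prove the forward inclusion directly by invoking primality of $q$ to collapse the finite union $\bigcup_{h\in H_V}h^{-1}A$ to a single translate, and you arrive at the condition ``$A'(q)\cap V\ne\varnothing$ for all $V\in\f{F}$'' by a bare FIP argument. The paper instead runs a single chain of biconditionals through $A^c\in\f{F}\cdot q$, then appeals to the auxiliary identities $(\f{F}\cdot q)^*=\f{F}^*\cdot q$ (Theorem~\ref{compcon}) and Theorems~\ref{elim} and~\ref{firstposition} to reach the final form. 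Your route is more elementary and self-contained; the paper's route is more uniform but imports more machinery.
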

\begin{corollary}\cite[Lemma 3.9 (a)]{CJ}\label{thch}
	Let $\f{F},\f{G}$ be filters on a semigroup $S.$ Then $$\thi{\f{F}}{\f{G}} = \{A\subseteq S: \ \textrm{there exists} \ q\in\overline{\f{G}} \ \textrm{such that} \ \overline{\f{F}} \cdot q \subseteq \overline{A} \}.$$
\end{corollary}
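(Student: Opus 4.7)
The plan is to derive this corollary from Theorem \ref{sych} by dualizing through Theorem \ref{dual}. First I would establish that $\thi{\f{F}}{\f{G}} = \left( \syn{\f{F}}{\f{G}}\right) ^*$. Although the paper notes that $\thi{\f{F}}{\f{G}}$ need not be a filter, the double-mesh operation is still an involution on arbitrary collections $\f{H} \subseteq \f{P}\left(S\right)$: indeed, $A \in \left(\f{H}^*\right)^*$ iff $A^c \notin \f{H}^*$ iff $\left(A^c\right)^c = A \in \f{H}$. Applying this involution to Theorem \ref{dual} gives $\thi{\f{F}}{\f{G}} = \left( \left( \thi{\f{F}}{\f{G}}\right)^*\right)^* = \left( \syn{\f{F}}{\f{G}}\right)^*$, so $A \in \thi{\f{F}}{\f{G}}$ iff $A^c \notin \syn{\f{F}}{\f{G}}$.

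Next, I would substitute Theorem \ref{sych} into this equivalence. The condition $A^c \notin \syn{\f{F}}{\f{G}}$ unfolds as the negation of the quantified statement in that theorem: there exists $q \in \overline{\f{G}}$ such that $\overline{\f{F}} \cdot q \cap \overline{A^c} = \varnothing$.

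Finally, I would convert this vanishing intersection into the desired inclusion. Since every ultrafilter $p \in \beta S$ satisfies exactly one of $A \in p$ or $A^c \in p$, we have $\overline{A^c} = \beta S \setminus \overline{A}$. Therefore $\overline{\f{F}} \cdot q \cap \overline{A^c} = \varnothing$ is equivalent to $\overline{\f{F}} \cdot q \subseteq \overline{A}$, which completes the characterization. There is no serious obstacle in this argument; the corollary follows mechanically once one observes that $\left(\cdot\right)^{**}$ acts as the identity without any filter hypothesis, so the dualization goes through despite the warning that $\thi{\f{F}}{\f{G}}$ need not be a filter.
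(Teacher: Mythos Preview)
Your proposal is correct and is precisely the derivation implicit in the paper: the statement is listed as a corollary of Theorem~\ref{sych} with no separate proof, and the paper later carries out exactly this dualization (taking meshes and negating the quantifier from Theorem~\ref{sych}) in the proof of Theorem~\ref{sup}. Your remark that $(\cdot)^{**}$ is the identity on arbitrary collections, not just filters, is a useful clarification that the paper leaves implicit.
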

\begin{theorem}\label{elim}
	Let $\f{F}$ be a filter on a set $S$. Then
	\begin{flalign}\bigcap_{p \in \overline{\f{F}}}p 
		&= \f{F} \\ \bigcup_{p \in \overline{\f{F}}}p &= \f{F}^*\end{flalign}
\end{theorem}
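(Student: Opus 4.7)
The plan is to prove both equalities by two-way inclusions, leveraging three facts already available in the excerpt: the definition $\overline{\f{F}} = \{p \in \beta S : \f{F} \subseteq p\}$, the primality of ultrafilters (so $A \notin p$ iff $A^c \in p$), and the corollary that any family with the finite intersection property extends to an ultrafilter. In all four inclusions, the easy direction is a direct unpacking of definitions, and the harder direction is to construct a witnessing ultrafilter by applying the FIP corollary to $\f{F} \cup \{A\}$ or $\f{F} \cup \{A^c\}$.

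For equation (1), the inclusion $\f{F} \subseteq \bigcap_{p \in \overline{\f{F}}} p$ is immediate: every $p \in \overline{\f{F}}$ contains $\f{F}$ by definition. For the reverse inclusion I will argue by contrapositive. Suppose $A \notin \f{F}$; I will exhibit $p \in \overline{\f{F}}$ with $A \notin p$, equivalently $A^c \in p$. It suffices to show $\f{F} \cup \{A^c\}$ has the FIP, and since $\f{F}$ is downward directed this reduces to showing $B \cap A^c \neq \varnothing$ for every $B \in \f{F}$. If some $B \in \f{F}$ satisfied $B \cap A^c = \varnothing$, then $B \subseteq A$, and upward closure would force $A \in \f{F}$, contradicting the assumption. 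So the FIP holds, and the corollary yields an ultrafilter $p \supseteq \f{F} \cup \{A^c\}$; this $p$ is in $\overline{\f{F}}$ and misses $A$.

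For equation (2), the inclusion $\bigcup_{p \in \overline{\f{F}}} p \subseteq \f{F}^*$ uses the ultrafilter property: if $A \in p$ with $\f{F} \subseteq p$, then $A^c \notin p$, and therefore $A^c \notin \f{F}$, i.e.\ $A \in \f{F}^*$. For the reverse inclusion, take $A \in \f{F}^*$, so $A^c \notin \f{F}$. Using the equivalent characterization of $\f{F}^*$ for filters noted in the paper, $A$ meets every element of $\f{F}$; combined with downward directedness this gives the FIP for $\f{F} \cup \{A\}$. The corollary produces an ultrafilter $p \supseteq \f{F} \cup \{A\}$, which lies in $\overline{\f{F}}$ and contains $A$, so $A \in \bigcup_{p \in \overline{\f{F}}} p$.

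There is no real obstacle here; the entire argument is bookkeeping around the FIP extension principle and the filter axioms. The one point worth being careful about is the equivalence $\f{F}^* = \{A : A \cap B \neq \varnothing \text{ for all } B \in \f{F}\}$ which is used in the second half — it is stated in the excerpt just after the definition of mesh, and its verification uses only upward closure of $\f{F}$ (a set $A$ fails to meet some $B \in \f{F}$ iff $B \subseteq A^c$ iff $A^c \in \f{F}$).
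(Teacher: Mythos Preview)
Your proof is correct and follows essentially the same approach as the paper's own proof: both directions of each equality are handled identically, extending $\f{F}\cup\{A^c\}$ (resp.\ $\f{F}\cup\{A\}$) to an ultrafilter via the finite intersection property. The only cosmetic difference is that you phrase the reverse inclusion in (1) as a contrapositive whereas the paper phrases it as a proof by contradiction.
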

\begin{proof}
	If $\f{F}$ is an ultrafilter then both $\left(1\right)$ and $\left(2\right)$ are clear because in this case $\overline{\f{F}} = \{\f{F}\}$.
	
	For $\left(1\right)$, it is clear that $\f{F} \subseteq \bigcap_{p \in \overline{\f{F}}}p.$ For the reverse inclusion we will go by contradiction. Suppose that there is an $A \in \left( \bigcap_{p \in \overline{\f{F}}}p\right)  \setminus \f{F}.$ It cannot be that $A$ is a superset of any set in $\f{F}$ as $A$ would then be in $\f{F}$. Therefore, $B \setminus A \ne \varnothing$ for all $B \in \f{F}.$ But this means that $A^c \cap B \ne \varnothing$ for all $B \in \f{F}.$ By the preceding line we may conclude that there is an ultrafilter, $p,$ in $\overline{\f{F}}$ which contains $A^c.$ Since $A$ is an element of every ultrafilter in $\overline{\f{F}},$ we have $A$ and $A^c$ in $p.$ Contradiction.
	
	For $\left(2\right),$ if $A \in p \in \overline{\f{F}}$ then $A \cap B \ne \varnothing$ for all $B \in \f{F}.$ Therefore, $A \in \f{F}^*.$ So $\bigcup_{p \in \overline{\f{F}}}p \subseteq \f{F}^*.$ Conversely, if $A \in \f{F}^*$ then $\f{F}\cup \{A\}$ has the finite intersection property and hence must be contained in an ultrafilter.
\end{proof}

\section{Results}
We first build on the characterizations in Theorem \ref{sych} to achieve computationally useful characterizations of $\syn{\f{F}}{\f{G}},$ $\thi{\f{F}}{\f{G}}$ and $\bigcup_{q\in \overline{ \f{F} }}\syn{\f{F}}{\thi{\f{F}}{q}}$.
In order to do this, we first note that we may extend the filter product to collections of sets that are simply upward closed. Christopherson and Johnson refer to such sets as $\emph{stacks}.$ If $\f{F}$ and $\f{G}$ are stacks then we define the product as we have been:
\[\f{F} \cdot \f{G} := \{A: \ \{x: \ x^{-1}A \in \f{G}\}\in \f{F} \}.\]
When we do this, $\f{F}\cdot \f{G}$ is a stack and the operation is associative. The mesh of a filter is a stack. For $\f{A}$, an arbitrary collection of filters on a set $S$, $\bigcap_{\f{F}\in\f{A}}\f{F}$ is a filter and $\bigcup_{\f{F} \in \f{A}}\f{F}$ is a stack.
\begin{theorem}\label{firstposition}
	Let $S$ be a semigroup and $\f{A} \subseteq \beta S$. Then \(\forall q \in \beta S\)
	\[\bigcap_{p \in \f{A}}\left(p\cdot q\right) = \left(\bigcap_{p \in \f{A}}p\right)\cdot q \quad \textrm{and} \quad \bigcup_{p \in \f{A}}\left(p\cdot q\right) = \left(\bigcup_{p \in \f{A}}p\right)\cdot q.\]
\end{theorem}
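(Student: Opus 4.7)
The plan is to unravel the definition of the filter product so that membership in a product $p \cdot q$ reduces to membership of a single auxiliary set in $p$, after which both equalities become straightforward manipulations with quantifiers. Concretely, I will fix $q \in \beta S$ and, for any $A \subseteq S$, set
\[
B_A := \{x \in S : x^{-1}A \in q\}.
\]
The whole point of the extension of the filter product to stacks (noted by the author just before the theorem) is that for any stack $\mathcal{H}$ on $S$ one has $A \in \mathcal{H} \cdot q$ if and only if $B_A \in \mathcal{H}$. In particular, for each individual $p \in \mathcal{A}$, $A \in p \cdot q \iff B_A \in p$.

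For the first identity, I would simply compute: $A \in \bigcap_{p \in \mathcal{A}}(p \cdot q)$ iff $B_A \in p$ for every $p \in \mathcal{A}$ iff $B_A \in \bigcap_{p \in \mathcal{A}} p$, which by the stack-extended definition of the product (here applied to the filter $\bigcap_{p \in \mathcal{A}} p$) is exactly $A \in \bigl(\bigcap_{p \in \mathcal{A}} p\bigr) \cdot q$. For the second identity, the same string of equivalences goes through with the universal quantifier replaced by an existential one, using the stack $\bigcup_{p \in \mathcal{A}} p$ (which is upward closed since each $p$ is).

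The only care required is the preliminary observation that the product $\mathcal{H} \cdot q$ is well-defined for the stacks $\bigcap_{p \in \mathcal{A}} p$ and $\bigcup_{p \in \mathcal{A}} p$, together with checking that the membership criterion $A \in \mathcal{H} \cdot q \iff B_A \in \mathcal{H}$ really is just the unpacked definition. There is no genuine obstacle here — the theorem is essentially saying that the operation $\,\cdot\, q$ depends on its left input only through which sets belong to it, and this behaviour distributes over arbitrary intersections and unions of stacks. Thus the proof is a short two-line verification for each of the two equalities.
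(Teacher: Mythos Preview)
Your proposal is correct and matches the paper's own proof almost verbatim: the paper also fixes $q$, unfolds $A \in p\cdot q$ as $\{x:\ x^{-1}A \in q\}\in p$, and then passes the universal (respectively existential) quantifier over $p\in\f{A}$ inside to obtain membership in $\bigcap_{p\in\f{A}}p$ (respectively $\bigcup_{p\in\f{A}}p$), invoking the stack-extended product for the last step. The only difference is cosmetic---you name the auxiliary set $B_A$, while the paper writes it out each time.
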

\begin{proof}
	Let $q \in \beta S.$
	\begin{flalign*}
		\forall p \in \f{A}, \quad A \in p\cdot q \quad  &\iff \quad\forall p \in \f{A}, \quad \{x: \ x^{-1}A \in q \} \in p \\
		& \iff \quad \{x: \ x^{-1}A \in q \} \in \bigcap_{p \in \f{A}}p \\
		& \iff \quad A \in \left( \bigcap_{p \in \f{A}}p \right) \cdot q.
	\end{flalign*}
	Likewise,
	\begin{flalign*}
		\exists p \in \f{A}, \quad A \in p\cdot q \quad &\iff \quad \exists p \in \f{A}, \quad \{x: \ x^{-1}A \in q \} \in p \\
		& \iff \quad \{x: \ x^{-1}A \in q \} \in \bigcup_{p \in \f{A}}p \\
		& \iff \quad A \in \left( \bigcup_{p \in\f{A}}p \right) \cdot q.
	\end{flalign*}
\end{proof}

It is the case that, for a filter $\f{F}$ and ultrafilter $p$, if $p \subseteq \f{F}^*$ then $p \in \overline{ \f{F} }.$ Indeed, since $p \subseteq \f{F}^*$, $p \cup \f{F}$ has the finite intersection property. Hence, there is an ultrafilter, $q$, with $p \cup \f{F} \subseteq q.$ Then $q \in \overline{\f{F}}$ and $q=p.$ We adapt this proof to the following useful lemma.

\begin{lemma}\label{enlight}
	Let $\f{F}$ be a filter on a semigroup $S$ and let $r,p \in \beta S.$ If $r \subseteq \f{F}^* \cdot p$, then $r \in \overline{\f{F}} \cdot p$.
\end{lemma}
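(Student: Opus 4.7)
The plan is to adapt the short argument in the paragraph just before the lemma. I want to produce a single $q \in \overline{\mathcal{F}}$ with $r = q \cdot p$; since the maps $q \mapsto q \cdot p$ and containment-in-$\overline{\mathcal{F}}$ are captured by the filter product and extension, this reduces to building a filter base.

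For each $A \in r$, write $A'(p) = \{x \in S : x^{-1}A \in p\}$, so that by definition $A \in q \cdot p$ iff $A'(p) \in q$. Unpacking the hypothesis: $r \subseteq \mathcal{F}^* \cdot p$ and the stack-product definition together say that for every $A \in r$, the set $A'(p)$ lies in $\mathcal{F}^*$, i.e., $A'(p) \cap F \ne \varnothing$ for every $F \in \mathcal{F}$. This is the crucial observation and is the step I expect to be the main obstacle for a reader: it is purely a matter of rewriting the hypothesis through the definition of $\mathcal{F}^* \cdot p$, but it is easy to miss that $\mathcal{F}^*$ is used here as a stack, not as a filter.

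Next I will check that the collection $\mathcal{C} := \{A'(p) : A \in r\} \cup \mathcal{F}$ has the finite intersection property. Since $p$ is a filter, $(A_1 \cap \cdots \cap A_n)'(p) = A_1'(p) \cap \cdots \cap A_n'(p)$, and since $r$ is a filter, $A_1 \cap \cdots \cap A_n \in r$; so any finite intersection of members of $\{A'(p) : A \in r\}$ is again of the form $B'(p)$ for some $B \in r$. Combined with the previous paragraph, such a $B'(p)$ meets every finite intersection of members of $\mathcal{F}$, giving FIP for $\mathcal{C}$.

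Finally I extend $\mathcal{C}$ to an ultrafilter $q \in \beta S$. Because $\mathcal{F} \subseteq q$, we have $q \in \overline{\mathcal{F}}$; because $\{A'(p) : A \in r\} \subseteq q$, the definition of filter product gives $A \in q \cdot p$ for every $A \in r$, i.e., $r \subseteq q \cdot p$. Both $r$ and $q \cdot p$ are ultrafilters, so this containment is equality, and therefore $r = q \cdot p \in \overline{\mathcal{F}} \cdot p$, as desired.
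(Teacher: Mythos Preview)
Your proof is correct and follows essentially the same route as the paper's: both unpack the hypothesis to get $A'(p)\in\f{F}^*$ for every $A\in r$, use $(\bigcap_{A\in H}A)'(p)=\bigcap_{A\in H}A'(p)$ together with $r$ being closed under finite intersections to show that $\f{F}\cup\{A'(p):A\in r\}$ has the finite intersection property, extend to an ultrafilter $q\in\overline{\f{F}}$, and conclude $r\subseteq q\cdot p$, hence $r=q\cdot p$ by maximality. The only difference is expository; your argument is slightly more explicit about why the FIP holds.
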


\begin{proof}
	Suppose $r \subseteq \f{F}^* \cdot p.$ Then for every finite $H \subseteq r$, $$\bigcap_{A \in H} A'\left(p\right) = \left(\bigcap_{A \in H} A\right)'\left(p\right)  \in \f{F}^*,$$ because $\bigcap_{A \in H}A \in r.$ Hence, $\f{F}\cup \{A'\left(p\right) : \ A \in r\}$ has the finite intersection property. There is a $q \in \beta S$ such that $\f{F}\cup \{A'\left(p\right) : \ A \in r\} \subseteq q.$ Thus $q \in \overline{\f{F}}$ and $ r \subseteq q \cdot p.$ Being ultrafilters, $r = q\cdot p.$
\end{proof}

The following result is useful. Recall that for any two filters $\f{F}$ and $\f{G}$, it may not be that $\overline{ \f{F} \cdot \f{G} } = \overline{\f{F}} \cdot \overline{\f{G}}$.

\begin{theorem} \label{compcon}
	Let $\f{F}$ be a filter on a semigroup $S.$ Then $\overline{\f{F}}\cdot q = \overline{\f{F}\cdot q}$ and $\f{F}^*\cdot q = \left(\f{F} \cdot q\right)^*$ for any $q\in \beta S$.
\end{theorem}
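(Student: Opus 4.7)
The plan is to prove the two equalities in the order given, but leveraging that the second (mesh) identity falls out of a direct bookkeeping argument using the fact that $q$ is an ultrafilter, and then to bootstrap from that into the first (closure) identity via Lemma \ref{enlight}.

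For the mesh equality $\f{F}^*\cdot q = \left(\f{F}\cdot q\right)^*$, I would just unwind the definitions. By definition of the filter product and the mesh, $A \in \f{F}^*\cdot q$ iff $\{x : x^{-1}A \in q\} \in \f{F}^*$, iff $\{x : x^{-1}A \in q\}^c \notin \f{F}$. The key observation is that, because $q$ is an ultrafilter, $x^{-1}A \notin q$ is equivalent to $x^{-1}(A^c) \in q$, so the complement in $S$ of $\{x : x^{-1}A \in q\}$ is exactly $\{x : x^{-1}(A^c) \in q\}$. Thus the displayed condition becomes $A^c \notin \f{F}\cdot q$, which is $A \in \left(\f{F}\cdot q\right)^*$. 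This is a chain of equivalences, so both inclusions are handled at once.

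For the closure equality, I would argue two inclusions. The inclusion $\overline{\f{F}}\cdot q \subseteq \overline{\f{F}\cdot q}$ is soft: if $p \in \overline{\f{F}}$, i.e.\ $\f{F}\subseteq p$, then using only the definition of the filter product one checks $\f{F}\cdot q \subseteq p\cdot q$, so $p\cdot q \in \overline{\f{F}\cdot q}$. For the reverse inclusion, suppose $r \in \overline{\f{F}\cdot q}$, so $\f{F}\cdot q \subseteq r$. Since $r$ is an ultrafilter, any $A \in r$ satisfies $A^c \notin r$, hence $A^c \notin \f{F}\cdot q$, which says $A \in \left(\f{F}\cdot q\right)^*$. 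Therefore $r \subseteq \left(\f{F}\cdot q\right)^*$, and by the mesh identity already proved, $r \subseteq \f{F}^*\cdot q$. Lemma \ref{enlight} then hands back $r \in \overline{\f{F}}\cdot q$, finishing the proof.

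The only subtle point is the ultrafilter step in the mesh computation, where one must remember that $q$ being an ultrafilter is essential to flip $x^{-1}A \notin q$ into $x^{-1}(A^c) \in q$; without it, the equivalence degenerates to an inclusion and (2) would only hold one way. Otherwise the argument is a clean two-step where the mesh identity feeds the closure identity through Lemma \ref{enlight}, and I expect no technical obstacle beyond being careful about this ultrafilter use.
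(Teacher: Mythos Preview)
Your proof is correct and follows essentially the same overall strategy as the paper: establish the mesh identity first, then use it together with Lemma~\ref{enlight} to obtain the nontrivial inclusion $\overline{\f{F}\cdot q}\subseteq\overline{\f{F}}\cdot q$, while the inclusion $\overline{\f{F}}\cdot q\subseteq\overline{\f{F}\cdot q}$ is handled by the same monotonicity observation in both arguments.

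The one genuine, if minor, difference is in how the mesh identity $\f{F}^*\cdot q=(\f{F}\cdot q)^*$ is obtained. The paper routes this through Theorems~\ref{elim} and~\ref{firstposition}, writing $\f{F}\cdot q=\bigcap_{p\in\overline{\f{F}}}p\cdot q$ and $\f{F}^*\cdot q=\bigcup_{p\in\overline{\f{F}}}p\cdot q$ and then complementing inside each ultrafilter $p\cdot q$. Your argument bypasses this decomposition entirely: you compute directly from the definitions, using only that $q$ is an ultrafilter to identify $\{x:x^{-1}A\in q\}^c$ with $\{x:x^{-1}(A^c)\in q\}$ (this is exactly the content of Lemma~\ref{il}). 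Your route is slightly more self-contained, since it does not invoke the representation of $\f{F}$ and $\f{F}^*$ as an intersection and union of ultrafilters; the paper's route, on the other hand, makes visible the structural picture that is reused later in Theorem~\ref{sup}.
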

\begin{proof}
	We first prove the later result.
	\begin{flalign*}
		A \in \left(\f{F}\cdot q\right)^* \quad & \iff \quad A^c \notin \f{F}\cdot q = \bigcap_{p \in \overline{\f{F}}}p\cdot q\\
		& \iff \quad  \left( \exists p\in \overline{\f{F}}\right)  \ \left( A^c \notin p\cdot q\right)  \\
		& \iff \quad \left( \exists p\in \overline{\f{F}}\right)  \ \left( A \in p\cdot q\right) \\
		& \iff \quad A \in \bigcup_{p \in \overline{\f{F}}}p\cdot q = \f{F}^*\cdot q.
	\end{flalign*}
	
	For the former result, if $r\in \overline{\f{F}}\cdot q$ then there is a $p \in \overline{\f{F}}$ such that $r = p\cdot q.$ Of course $p\cdot q \supseteq \f{F}\cdot q$ which means that $p\cdot q \in \overline{\f{F}\cdot q}.$ So, $\overline{\f{F}}\cdot q \subseteq \overline{\f{F}\cdot q}.$
	
	Let $r \in \overline{\f{F}\cdot q}.$ Then $r \subseteq \left( \f{F}\cdot q\right) ^* = \f{F}^*\cdot q.$ By Lemma \ref{enlight}, $r \in \overline{\f{F}}\cdot q.$
\end{proof}

\begin{theorem}\label{sup}
	Let $\f{F}, \f{G}$ be filters on a semigroup S. Then $$\syn{\f{F}}{\f{G}} = \bigcap_{q \in \overline{\f{G}}} \bigcup_{p \in \overline{\f{F}}} \left( p \cdot q\right) $$ and $$\thi{\f{F}}{\f{G}} = \bigcup_{q \in \overline{\f{G}}}\bigcap_{p \in \overline{\f{F}}}\left( p\cdot q\right) .$$
\end{theorem}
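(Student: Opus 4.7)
The plan is to rewrite the conditions appearing in Theorem~\ref{sych} and Corollary~\ref{thch} as membership assertions in unions and intersections of ultrafilters, and then recognize the resulting expressions as the right-hand sides of the two displayed equalities. Thus the proof will essentially be a packaging step on top of the heavy lifting already done in those results.

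First I would observe that $\overline{\f{F}} \cdot q = \{p \cdot q : p \in \overline{\f{F}}\}$ as a subset of $\beta S$, and that $\overline{A} = \{r \in \beta S : A \in r\}$. Consequently, $\overline{\f{F}} \cdot q \cap \overline{A} \ne \varnothing$ if and only if there exists some $p \in \overline{\f{F}}$ with $A \in p \cdot q$, which is precisely the statement $A \in \bigcup_{p \in \overline{\f{F}}}(p \cdot q)$. Symmetrically, $\overline{\f{F}} \cdot q \subseteq \overline{A}$ if and only if $A \in p \cdot q$ for every $p \in \overline{\f{F}}$, i.e.\ $A \in \bigcap_{p \in \overline{\f{F}}}(p \cdot q)$.

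With these translations in place, Theorem~\ref{sych} immediately yields
\[A \in \syn{\f{F}}{\f{G}} \iff (\forall q \in \overline{\f{G}})\ A \in \bigcup_{p \in \overline{\f{F}}}(p \cdot q) \iff A \in \bigcap_{q \in \overline{\f{G}}}\bigcup_{p \in \overline{\f{F}}}(p \cdot q),\]
and Corollary~\ref{thch} gives
\[A \in \thi{\f{F}}{\f{G}} \iff (\exists q \in \overline{\f{G}})\ A \in \bigcap_{p \in \overline{\f{F}}}(p \cdot q) \iff A \in \bigcup_{q \in \overline{\f{G}}}\bigcap_{p \in \overline{\f{F}}}(p \cdot q).\]

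There is no real obstacle here; the only delicate point is unwinding the two layers of quantifiers correctly. If desired, one could then invoke Theorem~\ref{firstposition} together with Theorem~\ref{elim} to collapse the inner ultrafilter-valued variables, rewriting the result in the more compact forms $\syn{\f{F}}{\f{G}} = \bigcap_{q \in \overline{\f{G}}} \f{F}^* \cdot q$ and $\thi{\f{F}}{\f{G}} = \bigcup_{q \in \overline{\f{G}}} \f{F} \cdot q$, which are presumably the shapes most convenient for the piecewise $\f{F}$-syndetic characterization to follow.
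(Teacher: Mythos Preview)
Your proof is correct, but it takes a genuinely different route from the paper's. You invoke Theorem~\ref{sych} and Corollary~\ref{thch} (the Christopherson--Johnson algebraic characterizations) as black boxes and simply translate the conditions ``$\overline{\f{F}}\cdot q \cap \overline{A} \ne \varnothing$'' and ``$\overline{\f{F}}\cdot q \subseteq \overline{A}$'' into membership statements $A \in \bigcup_{p\in\overline{\f{F}}}(p\cdot q)$ and $A \in \bigcap_{p\in\overline{\f{F}}}(p\cdot q)$. The paper instead works from the \emph{combinatorial} definition of $(\f{F},\f{G})$-syndetic, passes through the finite intersection property to produce $q \in \overline{\f{G}}$, rewrites the condition as $A \in (\f{F}\cdot q)^* = \f{F}^*\cdot q$, and only then applies Theorems~\ref{elim} and~\ref{firstposition} to reach the displayed form; the thick case is obtained afterward via the duality Theorem~\ref{dual}. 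Your argument is shorter and more transparent, essentially a repackaging of cited results; the paper's argument is more self-contained and, along the way, exhibits the intermediate identities $\syn{\f{F}}{\f{G}} = \bigcap_{q\in\overline{\f{G}}} \f{F}^*\cdot q$ and $\thi{\f{F}}{\f{G}} = \bigcup_{q\in\overline{\f{G}}} \f{F}\cdot q$, which you correctly anticipate as the forms needed downstream.
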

\begin{proof}
	\begin{flalign*}
		A \in \syn{\f{F}}{\f{G}} \quad \iff& \quad \left(\forall V \in \f{F}\right) \ \left(\exists H_V \in \fs{V}\right)  \ \left(\bigcup_{h \in H_V}h^{-1}A \in \f{G}\right) \\
		\iff& \quad \neg \left[\left(\exists V \in \f{F}\right) \ \left(\forall H \in \fs{V}\right) \ \left(\bigcup_{h \in H}h^{-1}A \notin \f{G}\right)\right] \\
		\iff& \quad \neg \left[\left(\exists V \in \f{F}\right) \ \left(\forall H \in \fs{V}\right) \ \left(\bigcap_{h \in H}h^{-1}A^c \in \f{G}^*\right)\right] \\
		\iff& \quad \neg \big[\left(\exists V \in \f{F}\right) \ \left(\forall H \in \fs{V}\right) \ (\f{G} \cup \{x^{-1}A^c: x \in V\} \\
		&\quad\phantom{\neg\big[} \textrm{has the f.i.p.})\big] \\
		\iff& \quad \neg \left[\left(\exists q \in \overline{ \f{G} }\right) \ \left(\exists V \in \f{F}\right) \ \left(\forall x \in V\right) \ \left(x^{-1}A^c \in q\right)\right] \\
		\iff& \quad \neg \left[ \left(\exists q \in \overline{\f{G}}\right) \ \left(A^c \in \f{F} \cdot q\right) \right] \\
		\iff& \quad \left(\forall q \in \overline{\f{G}}\right) \ \left(A^c \notin \f{F} \cdot q\right)\\
		\iff& \quad \left(\forall q \in \overline{\f{G}}\right) \ \left(A \in \left( \f{F} \cdot q\right)^*\right) \\
		\iff& \quad \left(\forall q \in \overline{\f{G}}\right) \ \left(A \in \f{F}^* \cdot q\right) \\
		\overset{\ref{elim}}{\iff}& \quad \left(\forall q \in \overline{\f{G}}\right) \ \left(A \in \left(\bigcup_{p \in \overline{\f{F}}}p\right) \cdot q\right) \\
		\overset{\ref{firstposition}}{\iff}& \quad \left(\forall q \in \overline{\f{G}}\right) \ \left(A \in \bigcup_{p \in \overline{\f{F}}}\left(p \cdot q \right)\right) \\
		\iff & \left(\forall q \in \overline{\f{G}}\right) \ \left(\exists p \in \overline{\f{F}}\right) \ \left(A \in p \cdot q\right)
	\end{flalign*}
	Thus $$\syn{\f{F}}{\f{G}} = \bigcap_{q \in \overline{\f{G}}}\bigcup_{p \in \overline{\f{F}}}\left( p\cdot q\right) .$$
	
	\noindent One can apply Theorem \ref{dual} to get \[\thi{\f{F}}{\f{G}} = \bigcup_{q \in \overline{\f{G}}}\bigcap_{ p \in \overline{ \f{F} } }\left( p\cdot q\right) .\]
	
	\noindent Indeed, 
	\begin{flalign*}
		\thi{\f{F}}{\f{G}} &= \left(\syn{\f{F}}{\f{G}}\right)^* \\
		&= \{A \subseteq S: \ A^c \not\in \syn{\f{F}}{\f{G}}\}\\
        &= \{A \subseteq S: \ \neg\left[ \left(\forall q \in \overline{\f{G}}\right) \ \left(\exists p \in \overline{\f{F}}\right) \ \left(A^c \in p\cdot q\right)\right]\}\\
		&= \{A \subseteq S: \ \left(\exists q \in \overline{\f{G}}\right) \ \left(\forall p \in \overline{\f{F}}\right)  \ \left(A^c \not\in p \cdot q\right) \} \\
		&= \{A \subseteq S: \ \left( \exists q \in \overline{\f{G}}\right) \ \left(\forall p \in \overline{\f{F}}\right) \ \left(A \in p \cdot q\right) \} \\
		&= \bigcup_{q \in \overline{\f{G}}}\bigcap_{p \in \overline{\f{F}}} \left( p\cdot q\right) 
	\end{flalign*}
\end{proof}

It is well known that for a compact topological space $X$, a continuous function $f: X \to X$, and a set $A \subseteq X$ we have $\overline{f\left(A\right)} = f\left(\overline{A}\right).$ While multiplication on the right is continuous in our setting, that $\overline{\f{F}}\cdot q = \overline{\f{F}\cdot q}$ is not an immediate application of this fact. Indeed, $\f{F}$ is not a subset of $\beta S$.

We now characterize piecewise $\f{F}$-syndeticity in the same manner. The author has no proof of $\left(2\right) \iff \left(3\right)$ in Theorem \ref{eq} which does not rely on the equality $\overline{ \f{F}\cdot q } = \overline{ \f{F} }\cdot q$.

\begin{theorem}\label{eq}
	Let $\f{F}$ be a filter on a semigroup $S$ and let $A \subseteq S.$ The following are equivalent:
	\begin{enumerate}
		\item $A$ is piecewise $\f{F}$-syndetic;
		\item $A \in \bigcup_{r \in \overline{ \f{F} }}\syn{\f{F}}{\thi{\f{F}}{r}}$;
		\item $A \in \bigcup_{r \in \overline{ \f{F} }}\bigcap_{ q \in \overline{ \f{F} } }\bigcup_{p \in \overline{\f{F}}}\left( p \cdot q \cdot r \right) $.
	\end{enumerate}
\end{theorem}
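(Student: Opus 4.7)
The plan is to prove $(1) \iff (2)$ by directly trading the finite intersection property in the definition of piecewise $\f{F}$-syndetic for containment in a single ultrafilter, and then to derive $(2) \iff (3)$ by combining Theorem \ref{sup} with the identity $\overline{\f{F}\cdot r} = \overline{\f{F}}\cdot r$ from Theorem \ref{compcon}.

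First I would record the auxiliary identification $\thi{\f{F}}{r} = \f{F}\cdot r$ for any ultrafilter $r \in \beta S$. Because $r^* = r$, a set $A$ lies in $\thi{\f{F}}{r}$ exactly when there is some $V \in \f{F}$ with $h^{-1}A \in r$ for every $h \in V$, i.e., $V \subseteq A'\left(r\right)$; this says $A'\left(r\right) \in \f{F}$, which is the definition of $A \in \f{F}\cdot r$. In particular $\thi{\f{F}}{r}$ is a filter, so Theorem \ref{sup} will later be applicable with $\f{G} = \thi{\f{F}}{r}$.

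For $(1)\Rightarrow (2)$, I would take the $H_V$ and $W_V$ witnessing piecewise $\f{F}$-syndeticity of $A$ and extend the f.i.p.\ family to an ultrafilter $r$. Every $V \in \f{F}$ lies in $r$, so $r \in \overline{\f{F}}$; setting $B_V = \bigcup_{h \in H_V} h^{-1}A$, the fact that $x^{-1}B_V \in r$ for all $x \in W_V$ forces $W_V \subseteq B_V'\left(r\right)$, hence $B_V'\left(r\right) \in \f{F}$ and so $B_V \in \f{F}\cdot r = \thi{\f{F}}{r}$; this is exactly $A \in \syn{\f{F}}{\thi{\f{F}}{r}}$. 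For $(2)\Rightarrow (1)$, given $r \in \overline{\f{F}}$ and the $H_V$ supplied by syndeticity, set $W_V := B_V'\left(r\right) \in \f{F}$; then for every $V \in \f{F}$ and every $x \in W_V$ both $V$ and $x^{-1}B_V$ belong to $r$, so the entire family $\{(x^{-1}B_V)\cap V\}$ sits in $r$ and a fortiori has the finite intersection property.

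For $(2) \iff (3)$, since $\thi{\f{F}}{r} = \f{F}\cdot r$ is a filter, Theorem \ref{sup} yields
\[
\syn{\f{F}}{\thi{\f{F}}{r}} = \bigcap_{q' \in \overline{\f{F}\cdot r}}\bigcup_{p \in \overline{\f{F}}}\left(p\cdot q'\right).
\]
By Theorem \ref{compcon}, $\overline{\f{F}\cdot r} = \overline{\f{F}}\cdot r$, so $q'$ ranges precisely over the products $q\cdot r$ with $q \in \overline{\f{F}}$; substituting collapses the right-hand side to $\bigcap_{q \in \overline{\f{F}}} \bigcup_{p \in \overline{\f{F}}} \left( p\cdot q\cdot r\right)$, which is exactly (3). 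The principal obstacle, as the paper itself flags, is precisely this reliance on Theorem \ref{compcon}: without the equality $\overline{\f{F}\cdot r} = \overline{\f{F}}\cdot r$ one cannot freely replace $q'$ by $q\cdot r$ under the intersection, and the equivalence $(2)\iff (3)$ would be inaccessible.
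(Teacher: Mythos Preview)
Your proposal is correct and follows essentially the same approach as the paper: you identify $\thi{\f{F}}{r}=\f{F}\cdot r$, trade the finite intersection property for a single ultrafilter $r\in\overline{\f{F}}$ to get $(1)\Leftrightarrow(2)$, and then invoke Theorem~\ref{sup} together with $\overline{\f{F}\cdot r}=\overline{\f{F}}\cdot r$ from Theorem~\ref{compcon} for $(2)\Leftrightarrow(3)$. The only cosmetic difference is that you verify $\thi{\f{F}}{r}=\f{F}\cdot r$ directly from the definition via $r^{*}=r$, whereas the paper reads it off from the $\thi{\f{F}}{\f{G}}$ formula in Theorem~\ref{sup} applied with $\overline{r}=\{r\}$.
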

\begin{proof}
	We will show $(1) \iff (2) \iff (3)$. Note that, $\thi{ \f{F} }{ r } = \bigcup_{r \in \overline{r}}\left( \f{F} \cdot r\right).$ However, $\overline{r} = \{r\}$ meaning that$$\thi{ \f{F} }{ r } = \f{F}\cdot r.$$
	
	We first show that $\left(1\right) \Longrightarrow \left(2\right)$. Suppose $A$ is piecewise $\f{F}$-syndetic. We need to be careful with the quantifiers in the definition of piecewise $\f{F}$-syndetic. Since $A$ is piecewise $\f{F}$-syndetic,\(\left(\exists H \in \bigtimes_{V \in \f{F}}\fs{V}\right) \ \left(\exists W \in \bigtimes_{V\in \f{F}}\f{F}\right)\)
	
	\noindent\( (\{x^{-1}\bigcup_{h \in H_V}\left( h^{-1}A\right)  \cap V: \ V \in \f{F}, \ x\in W_V\}\) has the finite intersection property).
	
	Equivalently,
	\(\left(\exists H \in \bigtimes_{V \in \f{F}}\fs{V}\right) \  \left(\exists W \in \bigtimes_{V\in \f{F}}\f{F}\right) \ \left(\exists r \in \overline{ \f{F} }\right)\)
	
	\noindent\(\left( \{x^{-1}\bigcup_{h \in H_V}\left( h^{-1}A\right) : \ V \in \f{F}, \ x\in W_V\}\subseteq r\right).\)
	
	It is important to note that the ultrafilter, $r,$ is not at all dependent on the $V \in \f{F}.$
	For all $V \in \f{F}$, $W_V \subseteq \{x: \ x^{-1}\bigcup_{h \in H_V}\left( h^{-1}A\right)  \in r\}$. Since $W_V \in \f{F},$ we may conclude that \[\{x: \ x^{-1}\bigcup_{h \in H_V}\left( h^{-1}A\right)  \in r\}\in \f{F}.\] We then have, $\left(\forall V \in \f{F}\right) \ \left(\bigcup_{h \in H_V}\left( h^{-1}A\right)  \in \f{F}\cdot r\right).$ By definition, $A \in \syn{\f{F}}{\f{F}\cdot r}.$
	
	We now show that $\left(2\right) \Longrightarrow \left(1\right)$. Suppose there is an $r \in \overline{ \f{F} }$ such that $A \in \syn{\f{F}}{\f{F}\cdot r}.$ Then for all $V \in \f{F}$ there exists an $H_V \in \fs{V}$ such that $\bigcup_{h \in H_V}h^{-1}A \in \f{F}\cdot r.$ Let $W_V =\{x: x^{-1}\bigcup_{h \in H_V}h^{-1}A \in r\}.$ Notice that $W_V \in \f{F}$. So then $\{x^{-1}\bigcup_{h \in H_V}h^{-1}A \cap V: \ V \in \f{F}, \ x\in W_V\}$ has the finite intersection property. Hence, $A$ is piecewise $\f{F}$-syndetic.

	Finally, we prove that $(2) \iff (3)$, which follows from Theorem $\ref{compcon}.$
	\begin{flalign*}
		\syn{\f{F}}{\thi{\f{F}}{r}} &= \syn{\f{F}}{\f{F} \cdot r} \\
		&= \bigcap_{s \in \overline{\f{F}\cdot r}}\bigcup_{p \in \overline{\f{F}}} \left( p\cdot s\right)  \\
		&= \bigcap_{s \in \overline{\f{F}}\cdot r}\bigcup_{p \in \overline{\f{F}}} \left( p\cdot s\right)\\
		&= \bigcap_{q \in \overline{\f{F}}} \bigcup_{p \in \overline{\f{F}}}\left( p\cdot q \cdot r\right) .
	\end{flalign*}
\end{proof}

If we add the assumption that $\overline{\f{F}}$ is a semigroup, then we get the following result.

\begin{lemma}\label{qe}
	Let $\f{F}$ be a filter on a semigroup $S$ such that $\overline{ \f{F} }$ is a subsemigroup of $\beta S$. Then, \[\bigcup_{r \in \overline{ \f{F} }}\bigcap_{ q \in \overline{ \f{F} } }\bigcup_{p \in \overline{\f{F}}}\left( p \cdot q \cdot r \right) = \bigcup_{e \in E\left(K\left(\overline{ \f{F} }\right)\right)}\bigcap_{ p \in \overline{ \f{F} } }\bigcup_{p \in \overline{\f{F}}}\left( p \cdot q \cdot e \right).\]
\end{lemma}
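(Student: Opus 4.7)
The plan is to prove the two inclusions separately. The inclusion $\supseteq$ is immediate from $E(K(\overline{\f{F}})) \subseteq \overline{\f{F}}$, since taking the outer union over a smaller index set can only shrink the resulting set.

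For the nontrivial inclusion $\subseteq$, I would fix $A$ in the left-hand side and a witnessing $r \in \overline{\f{F}}$, so that for every $q \in \overline{\f{F}}$ there exists $p \in \overline{\f{F}}$ with $A \in p \cdot q \cdot r$. The subsemigroup hypothesis makes $\overline{\f{F}}$ a compact right topological semigroup in its own right (it is closed in $\beta S$, hence compact, and being right topological is inherited), so Theorems \ref{uli} and \ref{crt} apply inside $\overline{\f{F}}$. In particular, $\overline{\f{F}}\cdot r$ is a left ideal of $\overline{\f{F}}$, and it therefore contains a minimal left ideal $L$ of $\overline{\f{F}}$ with some idempotent $e \in L \subseteq K(\overline{\f{F}})$. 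This $e$ is the candidate minimal idempotent for the right-hand side.

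The crucial step is promoting the witness for $r$ to a witness for $e$. Since $e \in \overline{\f{F}}\cdot r$, write $e = s \cdot r$ for some $s \in \overline{\f{F}}$. Given any $q \in \overline{\f{F}}$, the element $q \cdot s$ again lies in $\overline{\f{F}}$ by the subsemigroup hypothesis; applying the LHS hypothesis with $q \cdot s$ in place of $q$ produces some $p \in \overline{\f{F}}$ with $A \in p \cdot (q \cdot s) \cdot r$. Associativity then rewrites this as $A \in p \cdot q \cdot (s \cdot r) = p \cdot q \cdot e$, which is exactly the witness needed to place $A$ in the $e$-term of the right-hand side. As $q$ was arbitrary, $A \in \bigcap_{q \in \overline{\f{F}}}\bigcup_{p \in \overline{\f{F}}}(p \cdot q \cdot e)$.

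I expect the main conceptual point to be recognizing that the absorption trick works only because $s$ sits on the \emph{left} of $r$ in the factorization $e = s \cdot r$, so it can be pushed into the universally quantified middle factor $q$ without disturbing the inner existential on $p$. The remaining work, namely checking that $\overline{\f{F}}\cdot r$ is a left ideal of $\overline{\f{F}}$ (rather than merely of $\beta S$) and that the chosen idempotent therefore lies in $K(\overline{\f{F}})$ rather than just $K(\beta S)$, is routine bookkeeping once the subsemigroup hypothesis is in hand.
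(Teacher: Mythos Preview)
Your proof is correct and follows essentially the same route as the paper: both directions use $E(K(\overline{\f{F}}))\subseteq\overline{\f{F}}$ for the trivial inclusion, and for the other inclusion both pick a minimal idempotent $e$ inside the left ideal $\overline{\f{F}}\cdot r$ and then absorb the factorization into the universally quantified middle variable. Your version is in fact slightly cleaner, since by fixing a single $s$ with $e=s\cdot r$ you make explicit the choice that the paper leaves abstract (the paper writes $u\cdot e_r=q_u\cdot r$ for some $q_u$, which in your notation is just $q_u=u\cdot s$).
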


\begin{proof}
	 First note that, since $E\left(K\left(\overline{ \f{F} }\right)\right) \subseteq \overline{ \f{F} },$ we have \[\bigcup_{e \in E\left(K\left(\overline{ \f{F} }\right)\right)}\bigcap_{ p \in \overline{ \f{F} } }\bigcup_{p \in \overline{\f{F}}}\left( p \cdot q \cdot e \right) \subseteq \bigcup_{r \in \overline{ \f{F} }}\bigcap_{ q \in \overline{ \f{F} } }\bigcup_{p \in \overline{\f{F}}}\left( p \cdot q \cdot r \right).\]
	 
	 For the converse, consider that $\overline{ \f{F} }\cdot r$ is a left ideal of $\overline{ \f{F} }$ and hence contains a minimal left ideal, $L.$ Since $L$ is minimal, there exists a minimal idempotent of $\overline{ \f{F} }, e_r,$ such that $L=\overline{\f{F}}\cdot e_r.$ Since $\overline{ \f{F} }\cdot e_r \subseteq \overline{ \f{F} }\cdot r$, $\left(\forall u\in \overline{ \f{F} }\right)$ $\left(\exists q_u \in \overline{ \f{F} }\right)$ $\left(u\cdot e_r = q_u\cdot r\right).$ We are assuming that $\left(\exists r \in \overline{ \f{F} }\right)$ $\left( \forall q \in \overline{ \f{F} }\right)$ $\left(A \in \f{F}^* \cdot q \cdot r\right)$ and thus $\left(\forall u \in \overline{\f{F}}\right)$ $\left(A \in \f{F}^* \cdot q_u \cdot r = \f{F}^* \cdot u \cdot e_r\right)$. Hence, $$A \in \bigcup_{e \in E\left(K\left(\overline{ \f{F} }\right)\right)}\bigcap_{ q \in \overline{ \f{F} } }\left( \f{F}^* \cdot q \cdot e \right) = \bigcup_{e \in E\left(K\left(\overline{ \f{F} }\right)\right)}\bigcap_{ q \in \overline{ \f{F} } }\bigcup_{p \in \overline{\f{F}}}\left( p \cdot q \cdot e \right).$$
\end{proof}

For convenience, we define $$\ps{\f{F}}{\f{F}} := \{A: \exists B \in \syn{\f{F}}{\f{F}}, \ \exists C \in \thi{\f{F}}{\f{F}}, \ A=B \cap C\}.$$

The following is a proof of \cite[Corollary 4.5]{CJ} which is written in a way that is suggested by the above material.

\begin{theorem}
	Let $\f{F}$ be a filter on a semigroup $S$ such that $\overline{ \f{F} }$ is a subsemigroup. If $A \in \ps{\f{F}}{\f{F}}$ then $A$ is piecewise $\f{F}$-syndetic.
\end{theorem}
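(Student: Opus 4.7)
The plan is to combine the characterizations of $\syn{\f{F}}{\f{F}}$ and $\thi{\f{F}}{\f{F}}$ from Theorem \ref{sup} with the characterization of piecewise $\f{F}$-syndeticity from Theorem \ref{eq}, using the subsemigroup hypothesis in a single essential way: to keep compound products inside $\overline{\f{F}}$.

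Write $A = B \cap C$ with $B \in \syn{\f{F}}{\f{F}}$ and $C \in \thi{\f{F}}{\f{F}}$. By the thickness characterization in Theorem \ref{sup}, there is a fixed $r \in \overline{\f{F}}$ such that $C \in p' \cdot r$ for every $p' \in \overline{\f{F}}$; I would choose this $r$ as the witness in the outer union of the piecewise-$\f{F}$-syndetic characterization in Theorem \ref{eq}. The goal then reduces to showing that for every $q \in \overline{\f{F}}$ there exists $p \in \overline{\f{F}}$ with $A \in p \cdot q \cdot r$.

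Fix such a $q$. Because $\overline{\f{F}}$ is a subsemigroup, $q \cdot r$ lies in $\overline{\f{F}}$, so the syndeticity characterization of $B$ applied to the point $q\cdot r$ supplies some $p \in \overline{\f{F}}$ with $B \in p \cdot (q \cdot r) = p \cdot q \cdot r$. For the same reason, $p \cdot q \in \overline{\f{F}}$, and then the defining property of $r$ gives $C \in (p\cdot q) \cdot r = p\cdot q \cdot r$. Since $p\cdot q \cdot r$ is an ultrafilter, it is closed under finite intersections, so $A = B \cap C \in p\cdot q\cdot r$. Applying Theorem \ref{eq}, this is exactly the statement that $A$ is piecewise $\f{F}$-syndetic.

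There is no genuine obstacle; the only subtlety is making sure every compound product of elements of $\overline{\f{F}}$ that appears is itself in $\overline{\f{F}}$, which is precisely where the subsemigroup hypothesis is used. The argument mirrors, in the filter-relative setting, the classical observation that the intersection of a syndetic set with a thick set is piecewise syndetic, with Theorems \ref{sup} and \ref{eq} doing the bookkeeping.
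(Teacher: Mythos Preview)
Your argument is correct and is essentially the same as the paper's: both fix the thick witness $r \in \overline{\f{F}}$, use the subsemigroup hypothesis to get $q\cdot r \in \overline{\f{F}}$ so that syndeticity of $B$ yields $p \in \overline{\f{F}}$ with $B \in p\cdot q\cdot r$, then use $p\cdot q \in \overline{\f{F}}$ so that thickness gives $C \in p\cdot q\cdot r$, and conclude via Theorem~\ref{eq}. The paper's proof records the same steps with the notation $p_{q\cdot r}$ for your $p$.
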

\begin{proof}
	Take $A \in \ps{\f{F}}{\f{F}}.$ Suppose  $B\subseteq S$ is an $\f{F}$-syndetic set and $C\subseteq S$ is an $\f{F}$-thick set such that $A = B \cap C.$ By Theorem \ref{sup}, $ \left(\forall q \in \overline{\f{F}}\right)$ $\left(\exists p_q \in\overline{\f{F}}\right)$ $\left(B \in p_q\cdot q\right)$ and $\left(\exists r \in \overline{\f{F}}\right)$ $\left(\forall q \in\overline{\f{F}}\right)$ $\left(C \in q\cdot r \right)$. Thus, $\left( \forall q \in \overline{\f{F}}\right)$ $\left(B, C \in p_{q\cdot r} \cdot q \cdot r\right)$. Being ultrafilters,  $\left(\forall q \in \overline{\f{F}}\right)$ $\left(B\cap C \in p_{q\cdot r} \cdot q \cdot r\right)$. By Theorem \ref{eq}, $A$ is piecewise $\f{F}$-syndetic.
\end{proof}

In their proof, it appears that Christopherson and Johnson implicitly assume $\overline{\f{F}\cdot q} \subseteq \overline{ \f{F} }\cdot q$. Indeed, they ultimately conclude that $\left(\overline{\f{F}} \cdot p \cdot q\right) \cap \overline{A} \ne \varnothing$ for all $p \in \overline{ \f{F} };$ however, $\syn{\f{F}}{\f{F}\cdot q} = \{A\subseteq S: \ \textrm{for every} \ r\in\overline{\f{F}\cdot q} \ \textrm{we have} \ \overline{\f{F}} \cdot r \cap \overline{A} \ne \varnothing \}.$ One can use $\overline{\f{F}\cdot q} \subseteq \overline{ \f{F} }\cdot q$ to conclude that $\{A\subseteq S: \ \textrm{for every} \ p\in\overline{\f{F}} \ \textrm{we have} \ \overline{\f{F}} \cdot p\cdot q \cap \overline{A} \ne \varnothing \} \subseteq \syn{\f{F}}{\f{F}\cdot q}.$

Since the paper of Shuungula, Zelenyuk and Zelenyuk there has been an effort to expand the theory around the algebraic structure of the Stone-\v{C}ech compactification to encompass filter relative generalizations. In \cite{BPS}, Baglini, Patra, and Shaikh prove a filter relative version of Hindman, Maleki, and Strauss' combinatorial characterization of central sets \cite{HMS}. In this paper, they prove the next two theorems. The first is a filter relative generalization of \cite[Theorem 4.39]{HiSt1}, a well known and useful theorem.
\begin{theorem}
	Let $\f{F}$ be a filter on a semigroup $S$ such that $\overline{ \f{F} }$ is a subsemigroup of $\beta S$ and let $p \in \overline{ \f{F} }.$ Then the following statement are equivalent:
	\begin{enumerate}
		\item $p \in K\left(\overline{ \f{F} }\right);$
		\item $\forall \ A\in p, \ \{x \in S: \ x^{-1}A \in p\}$ is $\f{F}$-syndetic;
		\item $\forall \ r\in \overline{\f{F}},$ $p \in \overline{ \f{F} } \cdot r \cdot p.$
	\end{enumerate}
\end{theorem}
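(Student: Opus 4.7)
The plan is to run a circular chain $(1) \Rightarrow (3) \Rightarrow (2) \Rightarrow (3) \Rightarrow (1)$, adapting the proof of Theorem 4.39 of \cite{HiSt1} to the compact right topological subsemigroup $\overline{\f{F}}$. Since $\overline{\f{F}}$ is closed in $\beta S$ and is assumed to be a subsemigroup, Theorem \ref{crt} applies: minimal left ideals of $\overline{\f{F}}$ exist, are closed, and contain idempotents, so $E\left(K\left(\overline{\f{F}}\right)\right) \ne \varnothing$. The driving tool is the characterization from Theorem \ref{sup}: $A'\left(p\right)$ is $\f{F}$-syndetic if and only if for every $q \in \overline{\f{F}}$ there exists $s \in \overline{\f{F}}$ with $A'\left(p\right) \in s \cdot q$.

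For $(1) \Rightarrow (3)$, I would pick a minimal left ideal $L$ of $\overline{\f{F}}$ with $p \in L$ and fix $r \in \overline{\f{F}}$. Both $L$ and $\overline{\f{F}} \cdot r \cdot p$ are left ideals of $\overline{\f{F}}$, and they share the point $p \cdot r \cdot p$: it lies in $\overline{\f{F}} \cdot r \cdot p$ trivially, and in $L$ because $p \in L$ forces $\overline{\f{F}} \cdot p \subseteq L$. Minimality of $L$ then gives $L \subseteq \overline{\f{F}} \cdot r \cdot p$, so $p \in \overline{\f{F}} \cdot r \cdot p$. For $(3) \Rightarrow (2)$, fix $A \in p$ and $q \in \overline{\f{F}}$. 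By $(3)$ there is $s \in \overline{\f{F}}$ with $p = s \cdot q \cdot p$, so $A \in \left(s \cdot q\right) \cdot p$ unfolds to $A'\left(p\right) \in s \cdot q$, and the characterization above yields that $A'\left(p\right)$ is $\f{F}$-syndetic.

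For $(2) \Rightarrow (1)$, the plan is to first extract $(3)$ and then deduce $(1)$. Suppose toward contradiction that $p \notin \overline{\f{F}} \cdot q \cdot p$ for some $q \in \overline{\f{F}}$. Because right multiplication by $q \cdot p$ is continuous on the compact space $\overline{\f{F}}$, the set $\overline{\f{F}} \cdot q \cdot p$ is closed in $\overline{\f{F}}$, so some basic clopen $\left\{r \in \overline{\f{F}} : A \in r\right\}$ contains $p$ and misses $\overline{\f{F}} \cdot q \cdot p$. Then $(2)$ and the characterization produce $s \in \overline{\f{F}}$ with $A'\left(p\right) \in s \cdot q$, whence $A \in s \cdot q \cdot p \in \overline{\f{F}} \cdot q \cdot p$, a contradiction. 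To finish, fix $e \in E\left(K\left(\overline{\f{F}}\right)\right)$; then $\overline{\f{F}} \cdot e$ is a minimal left ideal of $\overline{\f{F}}$, and a standard argument (any $u \in \overline{\f{F}}$ has $\overline{\f{F}} \cdot u \cdot e = \overline{\f{F}} \cdot e$ by minimality) shows that $\overline{\f{F}} \cdot e \cdot p$ is also a minimal left ideal. Applying $(3)$ with $r = e$ yields $p \in \overline{\f{F}} \cdot e \cdot p \subseteq K\left(\overline{\f{F}}\right)$.

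The main obstacle is bookkeeping: every algebraic and topological step must stay inside $\overline{\f{F}}$ rather than $\beta S$. The hypothesis that $\overline{\f{F}}$ is a subsemigroup is exactly what guarantees continuity of right multiplication restricted to $\overline{\f{F}}$, existence of minimal left ideals and idempotents inside $\overline{\f{F}}$, and that all the products $s \cdot q \cdot p$ and $\overline{\f{F}} \cdot e \cdot p$ can be manipulated without leaving the ambient semigroup, so the classical proof of Theorem 4.39 of \cite{HiSt1} transplants cleanly, with Theorem \ref{sup} providing the bridge from $\f{F}$-syndeticity to the ultrafilter language.
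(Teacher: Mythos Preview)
Your argument is correct. The paper itself does not supply a proof of this theorem; it records that $(1)\Longleftrightarrow(2)$ is \cite[Theorem~2.2]{SZZ} and that $(2)\Rightarrow(3)\Rightarrow(1)$ is \cite[Theorem~2.9]{BPS}. Your proposal fills this in explicitly by transplanting the proof of \cite[Theorem~4.39]{HiSt1} to the compact right topological subsemigroup $\overline{\f{F}}$, with Theorem~\ref{sup} serving as the dictionary between ``$A'(p)$ is $\f{F}$-syndetic'' and the existence of $s\in\overline{\f{F}}$ with $A'(p)\in s\cdot q$. The one place worth a sentence of elaboration is the claim in $(1)\Rightarrow(3)$ that $L\subseteq\overline{\f{F}}\cdot r\cdot p$ follows from the shared point $p\cdot r\cdot p$: this is the standard fact that a minimal left ideal meeting any left ideal must sit inside it (take $\overline{\f{F}}\cdot(p\cdot r\cdot p)\subseteq L\cap\overline{\f{F}}\cdot r\cdot p$ and invoke minimality), and similarly the minimality of $\overline{\f{F}}\cdot e\cdot p$ in $(3)\Rightarrow(1)$ deserves the one-line justification you sketched. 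With those routine expansions, the write-up is self-contained and matches what the cited references do.
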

That $(1)$ and $(2)$ above are equivalent comes from \cite[Theorem 2.2]{SZZ}. In \cite[Theorem 2.9]{BPS}, they show that $(2) \implies (3) \implies (1).$

Included in \cite[Theorem 2.10]{BPS} is the following theorem.  For the same reason that Theorem \ref{aps} may not be equivalent to \cite[Theorem 4.43]{HiSt1}, Theorem \ref{rps} is not a direct filter-relative generalization of \cite[Theorem 4.43]{HiSt1}.
\begin{theorem}\label{rps}
	Let $S$ be a semigroup. Let $A \subseteq S$ and $\f{F}$ be a filter on $S$ such that $\overline{ \f{F} }$ is a closed subsemigroup of $\beta S$. Then the following statements are equivalent:
	\begin{enumerate}
		\item $A$ is piecewise $\f{F}$-syndetic;
		\item There exists $e \in E\left(K\left(\overline{\f{F}}\right)\right)$ such that $\{x \in S: \ x^{-1}A \in e\}$ is $\f{F}$-syndetic;
		\item There exists $e \in E\left(K\left(\overline{ \f{F} }\right)\right)$ such that for every $V \in \f{F}$ there exists $x \in V$ for which $x^{-1}A \in e.$
	\end{enumerate}
\end{theorem}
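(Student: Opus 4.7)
My plan is to establish the cycle $(1) \Longrightarrow (2) \Longrightarrow (3) \Longrightarrow (1)$, with the last implication cited from \cite[Theorem 2.10]{BPS}, following the same template the paper uses for Theorem \ref{aps}. The content lives in $(1) \Longrightarrow (2)$, which I obtain by chaining the equivalent reformulations of piecewise $\f{F}$-syndeticity that the paper has already established, and $(2) \Longrightarrow (3)$ is a short semigroup argument.

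For $(1) \Longrightarrow (2)$, Theorem \ref{eq} gives some $r \in \overline{\f{F}}$ with $A \in \syn{\f{F}}{\f{F}\cdot r}$. Theorem \ref{sup}, combined with $\overline{\f{F}\cdot r} = \overline{\f{F}}\cdot r$ from Theorem \ref{compcon}, rewrites this as
\[A \in \bigcap_{q \in \overline{\f{F}}} \bigcup_{p \in \overline{\f{F}}} \left(p \cdot q \cdot r\right).\]
Lemma \ref{qe} then allows me to replace $r$ by a minimal idempotent, yielding $e \in E(K(\overline{\f{F}}))$ with $A \in \syn{\f{F}}{\f{F}\cdot e}$. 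The final step is to unfold this: $\bigcup_{h \in H_V}h^{-1}A \in \f{F}\cdot e$ means $\{y: \ y^{-1}\bigcup_{h \in H_V}h^{-1}A \in e\} \in \f{F}$, and because $e$ is a prime filter, $\bigcup_{h \in H_V}(hy)^{-1}A \in e$ iff some $(hy)^{-1}A \in e$, which is iff $y \in \bigcup_{h \in H_V}h^{-1}A'(e)$. Thus $A \in \syn{\f{F}}{\f{F}\cdot e}$ is precisely the statement that $A'(e)$ is $\f{F}$-syndetic, which is (2).

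For $(2) \Longrightarrow (3)$, Theorem \ref{sych} turns the $\f{F}$-syndeticity of $A'(e)$ into $\overline{\f{F}}\cdot q \cap \overline{A'(e)} \ne \varnothing$ for every $q \in \overline{\f{F}}$. Because $\overline{\f{F}}$ is a subsemigroup of $\beta S$, $\overline{\f{F}}\cdot q \subseteq \overline{\f{F}}$, so $\overline{\f{F}} \cap \overline{A'(e)} \ne \varnothing$; by Theorem \ref{elim} this gives $A'(e) \in \f{F}^*$, hence $A'(e) \cap V \ne \varnothing$ for each $V \in \f{F}$, and any $x$ in this intersection witnesses (3).

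The main obstacle is the upgrade step in $(1) \Longrightarrow (2)$, namely producing a minimal idempotent $e$ from the generic witnessing ultrafilter $r$. That is precisely the role of Lemma \ref{qe}, so once it is in place the remaining work is the translation between the ``two-level'' product $\syn{\f{F}}{\f{F}\cdot e}$ and the ordinary $\f{F}$-syndeticity of $A'(e)$; the primality of $e$ makes this translation clean. The semigroup hypothesis on $\overline{\f{F}}$ then makes $(2) \Longrightarrow (3)$ immediate.
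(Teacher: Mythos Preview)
Your proof is correct and follows essentially the same route as the paper for $(1)\Longrightarrow(2)$: both use Theorem~\ref{eq} and Lemma~\ref{qe} to land on a minimal idempotent $e$ with $A\in\bigcap_{q\in\overline{\f{F}}}\bigcup_{p\in\overline{\f{F}}}(p\cdot q\cdot e)$, then pass to the $\f{F}$-syndeticity of $A'(e)$---the paper does this last step in one line via the identity ``$A\in p_q\cdot q\cdot e\iff A'(e)\in p_q\cdot q$'' together with Theorem~\ref{sup}, whereas you unfold the combinatorial definition using primality of $e$, which amounts to the same observation. The paper defers $(2)\Longrightarrow(3)$ and $(3)\Longrightarrow(1)$ entirely to \cite{BPS}, so your short argument for $(2)\Longrightarrow(3)$ is extra but fine.
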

We present a proof that $\left(1\right) \Longrightarrow \left(2\right)$ which is different from what can be found in \cite{BPS}.
\begin{proof}
	Suppose $A$ is piecewise $\f{F}$-syndetic. Then, by Lemma \ref{qe}, $\left(\exists e \in E\left( K\left( \overline{\f{F}}\right) \right)\right)$ $\left(\forall q \in \overline{ \f{F} }\right)$ $\left(\exists p_q \in \overline{ \f{F} } \right)$ $\left(A \in p_q \cdot q \cdot e\right).$ Thus, $\left( \forall q \in \overline{ \f{F} }\right) \left(A'\left(e\right) \in p_q \cdot q\right).$  By Theorem \ref{sup}, we may conclude that $A'\left(e\right)$ is $\f{F}$-syndetic.
\end{proof}

We use Theorem \ref{rps} to prove our main result in the same way that we used Theorem \ref{aps} to prove Theorem \ref{abc}. This answers the open question posed by Christopherson and Johnson. \cite[Question 4.6]{CJ}
\begin{theorem}\label{cpfs} Let $\f{F}$ be a filter on $S$ such that $\overline{\f{F}}$ is a subsemigroup of $\beta S$. If $A$ is piecewise $\f{F}$-syndetic, then $A \in \ps{\f{F}}{\f{F}}.$
\end{theorem}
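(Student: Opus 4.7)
The plan is to mimic the proof of Theorem \ref{abc} almost verbatim, with Theorem \ref{rps} playing the role of Theorem \ref{aps}. First, apply Theorem \ref{rps} to the piecewise $\f{F}$-syndetic set $A$ to obtain a minimal idempotent $e \in E(K(\overline{\f{F}}))$ such that $A'(e)$ is $\f{F}$-syndetic. Then use the identity
\[A = \bigl(A \cup A'(e)\bigr) \cap \bigl(A \cup (A^c)'(e)\bigr)\]
and show that the first factor is $\f{F}$-syndetic while the second is $\f{F}$-thick, exhibiting $A$ as an element of $\ps{\f{F}}{\f{F}}$.

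For the first factor, $A \cup A'(e) \supseteq A'(e)$, and the collection of $\f{F}$-syndetic sets is upward closed: if $\bigcup_{h \in H_V} h^{-1} A'(e) \in \f{F}$ for every $V \in \f{F}$, then the same union with $A \cup A'(e)$ in place of $A'(e)$ is a superset and therefore still in $\f{F}$. For the second factor, repeat the maneuver from the proof of Theorem \ref{abc}: using Lemma \ref{il} we have $A'(e) \cup (A^c)'(e) = S$, and using Lemma \ref{ids} (applied to $A$ when $x \in A'(e)$ and to $A^c$ when $x \in (A^c)'(e)$) we conclude that $x^{-1}\bigl(A \cup (A^c)'(e)\bigr) \in e$ for every $x \in S$. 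Since $e$ is closed under finite intersections, $\bigcap_{h \in H} h^{-1}\bigl(A \cup (A^c)'(e)\bigr) \in e$ for every finite $H \subseteq S$.

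The one genuinely new ingredient, and the step I would flag as the crux, is the observation that makes the ``ultrafilter containment'' from the old proof strong enough to give $\f{F}$-thickness: because $e \in E(K(\overline{\f{F}})) \subseteq \overline{\f{F}}$, we have $\f{F} \subseteq e$, so every member of $e$ meets every member of $\f{F}$ and hence belongs to $\f{F}^*$. Taking $V = S \in \f{F}$ in the definition of $\f{F}$-thickness, the previous paragraph then gives $\bigcap_{h \in H} h^{-1}\bigl(A \cup (A^c)'(e)\bigr) \in \f{F}^*$ for every $H \in \fs{V}$, so $A \cup (A^c)'(e)$ is $\f{F}$-thick. I do not expect any further obstacle; the whole argument reduces to the fact that a minimal idempotent of $\overline{\f{F}}$ automatically extends $\f{F}$, which is exactly what lets the non-relative proof of Theorem \ref{abc} carry over.
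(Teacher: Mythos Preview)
Your proposal is correct and follows essentially the same route as the paper: apply Theorem~\ref{rps} to get a minimal idempotent $e\in E(K(\overline{\f{F}}))$ with $A'(e)$ $\f{F}$-syndetic, then write $A=\bigl(A\cup A'(e)\bigr)\cap\bigl(A\cup(A^c)'(e)\bigr)$ with the first factor $\f{F}$-syndetic and the second $\f{F}$-thick. The only cosmetic difference is that the paper phrases the thickness step algebraically (showing $A\cup(A^c)'(e)\in p\cdot e$ for every $p\in\beta S$ and then invoking ``all thick sets are $\f{F}$-thick''), whereas your direct argument via $e\subseteq\f{F}^*$ is, if anything, more transparent about where the hypothesis $e\in\overline{\f{F}}$ is actually used.
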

\begin{proof}
	Let $A$ be piecewise $\f{F}$-syndetic. By Theorem \ref{rps} there is some $e \in E \left(K\left(\overline{ \f{F} }\right)\right)$ such that $A'\left(e\right)$ is $\f{F}$-syndetic. Thus, 
	\[A \cup A'\left( e\right) \ \textrm{is} \ \f{F}\textrm{-syndetic}.\]
	
	It is the case, that $\forall p \in \beta S,$ $A \in p\cdot e$ if and only if $A'\left(e\right) \in p\cdot e.$ Indeed, $A \in p\cdot e = p\cdot e \cdot e$ if and only if $A'\left(e\right) \in p\cdot e.$ Thus, $\forall p \in \beta S,$ either $A \in p\cdot e$ or $\left(A'\left(e\right)\right)^c \in p\cdot e.$ We may then conclude that, $\forall p \in \beta S$, $A \cup \left(A^c\right)'\left( e\right) \in p \cdot e.$ Thus, $A \cup \left(A^c\right)'\left( e\right)$ is thick. All thick sets are $\f{F}$-thick.
	
Finally,
	\begin{flalign*}
		\left( A \cup \left(A^c\right)'\left( e\right)\right)  \cap \left( A \cup A'\left( e\right)\right)
		= & \left( A \cup \left(A'\left( e\right)\right)^c\right)  \cap \left( A \cup A'\left( e\right)\right) \\
		=& A \cup \left( \left(A'\left( e\right)\right)^c \cap A'\left( e\right)\right) \\
		=& A \cup \varnothing = A.
	\end{flalign*}
\end{proof}

\backmatter

\bmhead{Acknowledgments}

 I would like to thank my advisor, Dr. Randall McCutcheon, for his helpful guidance. I would also like to thank the anonymous referee. Their knowledgeable comments corrected some mistakes and improved the readability of this paper.

\bibliographystyle{plain}
\bibliography{Griffin_Char_of_pFs_v3}

\end{document}